\DeclareMathAlphabet{\mathpzc}{OT1}{pzc}{m}{it}
\newcommand{{\M}}{{\textsf{{L}$_\sharp^{2}$}}}
\newcommand{\bsfV}{\boldsymbol{\mathsf V}}
\newcommand{\bsfU}{\boldsymbol{\mathsf U}}
\newcommand{\bsfu}{\boldsymbol{\mathsf u}}
\newcommand{\db}[1]{\color{red}{#1}\color{black}}
\newcommand{\Div}{\mbox{\rm div}\,}
\newcommand{\supp}{\mbox{\rm supp}\,}
\newcommand{\curl}{\mbox{\rm curl}\,}
\newcommand{\Int}[2]{{\displaystyle \int_{ #1}^{ #2}}}
\newcommand{\Lim}[1]{{\displaystyle \lim_{ #1}}}
\newcommand{\Sum}[2]{{\displaystyle \sum_{#1}^{#2}}}
\newcommand{\BCup}[2]{{\displaystyle \bigcup_{#1}^{#2}}}
\newcommand{\Frac}[2]{\displaystyle{\frac{\displaystyle{#1}}{\displaystyle{#2}}}}
\newcommand{\beea}{\begin{eqnarray}}
\newcommand{\eeea}{\end{eqnarray}}
\newcommand{\bfe}{{\mbox{\boldmath $e$}} }
\newcommand{\bfz}{{\mbox{\boldmath $z$}} }
\newcommand{\0}{{\mbox{\boldmath $0$}} }
\newcommand{\BF}{\begin{footnotesize}}
\newcommand{\EF}{\end{footnotesize}}
\newcommand{\bi}{\begin{itemize}}
\newcommand{\ei}{\end{itemize}}
\newcommand{\ed}{\end{document}}
\newcommand{\be}{\begin{equation}}
\newcommand{\ba}{\begin{array}}
\newcommand{\ea}{\end{array}}
\newcommand{\ee}{\end{equation}}
\newcommand{\eeq}[1]{\label{eq:#1}\end{equation}}
\newcommand{\real}{{\mathbb R}}
\newcommand{\nat}{{\mathbb N}}
\newcommand{\bfx}{\mbox{\boldmath $x$}}
\newcommand{\bfy}{\mbox{\boldmath $y$}}
\newcommand{\bfxi}{\mbox{\boldmath $\xi$}}
\newcommand{\bfphi}{\mbox{\boldmath $\varphi$}}
\newcommand{\bfv}{{\mbox{\boldmath $v$}} }
\newcommand{\bfu}{{\mbox{\boldmath $u$}} }
\newcommand{\bfw}{{\mbox{\boldmath $w$}} }
\newcommand{\bff}{{\mbox{\boldmath $f$}} }
\newcommand{\bfa}{{\mbox{\boldmath $a$}} }
\newcommand{\bfR}{{\mbox{\boldmath $R$}} }
\newcommand{\bfN}{{\mbox{\boldmath $N$}} }
\newcommand{\bfh}{{\mbox{\boldmath $h$}} }
\newcommand{\cala}{{\cal A}}
\newcommand{\cald}{{\cal D}}
\newcommand{\calf}{{\cal F}}
\newcommand{\cali}{{\cal I}}
\newcommand{\bfeta}{\mbox{\boldmath $\eta$}}
\newcommand{\bfV}{{\mbox{\boldmath $V$}} }
\newcommand{\bfU}{{\mbox{\boldmath $U$}} }
\newcommand{\bfb}{{\mbox{\boldmath $b$}} }
\newcommand{\bfn}{{\mbox{\boldmath $n$}} }
\newcommand{\half}{\mbox{$\frac{1}{2}$}}
\def\Bbb R{\real}
\def\hat{\widehat}
\def\tilde{\widetilde}
\def\bar{\overline}
\newcommand{\bfchi}{\mbox{\boldmath $\chi$}}
\newcommand{\bfdelta}{\mbox{\boldmath $\delta$}}
\newcommand{\ED}{\end{description}}
\newcommand{\dist}{\mbox{\rm dist\,}}
\newtheorem{definition}{Definition}[section]
\newcommand{\Bd}{\begin{definition}\begin{rm}}
\newcommand{\Ed}{\end{rm}\end{definition}}
\newtheorem{remark}{Remark}[section]
\newcommand{\Br}{\begin{remark}\begin{rm}}
\newcommand{\Er}{\end{rm}\end{remark}}
\newtheorem{proposition}{Proposition}[section]
\newcommand{\Bp}{\begin{proposition}\begin{sl}}
\newcommand{\EP}[1]{\end{sl}\label{proposition:#1}\end{proposition}}
\newcommand{\propref}[1]{{\rm Proposition \ref{proposition:#1}}}
\newcommand{\Bt}{\begin{theorem}\begin{sl}}
\newcommand{\Et}{\end{sl}\end{theorem}}
\newcommand{\Bl}{\begin{lemma}\begin{sl}}
\newcommand{\El}{\end{sl}\end{lemma}}
\newtheorem{theorem}{Theorem}[section]
\newtheorem{lemma}{Lemma}[section]
\newtheorem{corollary}{Corollary}[section]
\renewcommand{\eqref}[1]{{\rm (\ref{eq:#1})}}
\newcommand{\Bc}{\begin{corollary}\begin{sl}}
\newcommand{\Ec}{\end{sl}\end{corollary}}
\newcommand{\ET}[1]{\end{sl}\label{theorem:#1}\end{theorem}}
\newcommand{\EDD}[1]{\end{rm}\label{definition:#1}\end{definition}}
\newcommand{\EL}[1]{\end{sl}\label{lemma:#1}\end{lemma}}
\newcommand{\ER}[1]{\end{rm}\label{remark:#1}\end{remark}}
\newcommand{\EC}[1]{\end{sl}\label{corollary:#1}\end{corollary}}
\newcommand{\defref}[1]{{\rm Definition \ref{definition:#1}}}
\newcommand{\lemmref}[1]{{\rm Lemma \ref{lemma:#1}}}
\numberwithin{equation}{section}
\begin{document}

\title{Equilibrium Configurations and their Uniqueness\\ in a Fluid-Solid Interaction Problem} 
\author{Denis Bonheure\thanks{D\'epartement de Math\'ematique, Universit\'e Libre de Bruxelles, Belgium}\,,
\  Giovanni P. Galdi\thanks{Department of Mechanical Engineering and Materials Science, University of Pittsburgh, USA.}
\  \&\,\ Clara Patriarca\thanks{D\'epartement de Math\'ematique, Universit\'e Libre de Bruxelles, Belgium} }
\date{}
\maketitle\noindent

\begin{abstract}
	\noindent
We demonstrate existence in the ``large" and uniqueness in the ``small" of equilibrium configurations for the coupled system consisting of a Navier-Stokes fluid interacting with a rigid body subjected to spring forces and restoring moments. The driving mechanism is a uniform, given velocity field of the fluid at large spatial distances from the body. The main difficulty in the proof of the above properties arises from the fact that the body can rotate around a given axis, which produces a highly nonlinear problem. 
	
	\bigskip
	\noindent
	\textbf{AMS subject classification}: 74F10, 76-10, 76D05, 35Q30 \\
	\textbf{Keywords}: Fluid-structure interaction, Equilibrium, Rotation, Navier-Stokes equations.  
\end{abstract} 

\color{black}
\section{Introduction}
In this article, we carry out a rigorous study of the equilibrium configurations of a rigid body, $\mathscr B$, subjected to restoring elastic forces and torques, interacting with a viscous fluid driven by a time-independent uniform flow at large distance from $\mathscr B$,  characterized by a prescribed  vector $\bfV$. Our goal is precisely  to investigate the problems of their existence and  uniqueness.
\par
The motivation for this type of analysis stems from, but is not limited to, the investigation of the structural properties of suspension bridges interacting with wind flow which, as is known, is one of the most relevant engineering problems in the general field of flow-induced oscillations; see, e.g., the monographs \cite{Bl,NaRo}.
\par
The model we adopt here is more general than the one usually chosen by engineers \cite{Bl,NaRo} and that we considered in similar investigations \cite{BeBoGaGaPe,BoGaGa1,BoGaGa2}. Actually, unlike all these works, we allow $\mathscr B$ also to rotate along a given direction while subjected to a restoring torque. This way we are able to provide more accurate description of suspension bridges where the deck may undergo angular displacement, $\theta$, around its longitudinal direction. It is known that the torsional movement often couples with vertical oscillations and this coupling amplifies stresses and can destabilize the entire structure. Such a model, while representing a more realistic physical situation, at the same time introduces several mathematical difficulties that make the study particularly challenging. To explain this point, we observe that the generic configuration of $\mathscr B$ (compact set of $\real^3$) is characterized by the position vector, $\bfdelta$, of the center of mass of $\mathscr B$  and the angle $\theta$ around the axis of rotation, both evaluated from the unloaded position $(\bfdelta\equiv\0,\theta=0)$ of the elastic force and torque. Denote by $\mathscr S_0$   the reference configuration  of the coupled system body-liquid, where $\mathscr B$ is in its unloaded configuration and the liquid is at rest in the region $\Omega:=\real^3\backslash\mathscr B$, corresponding to $\bfV=\0$.  Then, for a given $\bfV\in \real^3\backslash\{\0\}$, determining an equilibrium configuration for the coupled system body-liquid,  means to find $(\bfdelta_0,\theta_0)\in \real^3\times\real$,  such that the liquid flow regime is independent of time (steady state). Thus, since the region occupied by the liquid also depends on $\theta_0$,  the search of equilibria  becomes  rather intricate. To address this problem, we rewrite the equations of motion in a system of coordinates attached to the reference configuration, so that the region of flow becomes fixed and known. However, in doing so, the direction of the vector $\bfV$ becomes then a  nonlinear function of $\theta$, which  furnishes the coupling between the motion of the liquid and the (nonlocal) equation of balance of the angular momentum for $\mathscr B$; see \eqref{9_0}-\eqref{8}. 
\par
For this reformulated problem, our first goal is to prove the existence of solutions without imposing restrictions on the magnitude of $\bfV$. An appropriate tool for this purpose is the corollary of Leray-Schauder degree theory provided by the Schaefer fixed point theorem \cite{Sh}. However, this theorem requires the compactness property of the associated  map, which, in our case, is not available since the fluid motion occurs in the exterior domain $\Omega$. To overcome this issue, we apply the ``invading domains" approach, that is, we consider the given problem on each element of an increasing sequence of bounded domains $\{\Omega_k\}$ whose union coincides with $\Omega$, endowed with homogeneous boundary conditions on the ``fictitious" boundary; see \eqref{24}. We then show that, in each $\Omega_k$, the problem can be formulated as a nonlinear equation in a suitable Banach space (see \eqref{28}), and that the relevant (nonlinear) map is compact (see Lemma \ref{lemma:com}). Moreover, we prove that, for each $k\in\nat$, all possible solutions satisfy a uniform bound that depends only on $\Omega$,  and on the nondimensional number $\lambda$ related to $V=|\bfV|$; see  \eqref{lamb}. Therefore, by Schaefer theorem, we establish the existence of a solution ${\sf s}_k$ in each $\Omega_k$ (see Proposition \ref{proposition:R}) and, thanks to the uniform bound, from the sequence $\{{\sf s}_k\}$ we can extract a subsequence that converges to a weak solution of the original problem in $\Omega$; see Proposition \ref{prop:1}. Indeed, by classical results, it follows that velocity and pressure fields associated to such a solution, on the one hand, are of class $C^\infty(\Omega)$ and, on the other hand, satisfy sharp summability properties ``far" from $\mathscr B$; see Theorem \ref{th:1}. We next address the problem uniqueness of equilibria, that is, uniqueness of solutions given in Theorem \ref{th:1} in their own class of existence; see Theorem \ref{th:unicita}. Because of the highly nonlinear character of the equations, the proof of such a property cannot be carried out by classical energy arguments. Indeed,  we employ a more general strategy that develops according to the following two steps. In the first one, we prove that, for all values of $\lambda$ below a certain constant depending only on $\Omega$, the corresponding solutions  can be bounded from above, in suitable norms, by an affine function of $\lambda$; see Lemma \ref{Lq-estimate}. This result is, in turn, obtained by employing, on the fluid equations, appropriate estimates in {\em homogeneous} Sobolev spaces for the Oseen problem in exterior domain  combined with {a successive approximation argument}. With the properties shown in Lemma \ref{Lq-estimate} in hand, we can then prove in Theorem \ref{th:unicita} that, if $\lambda$ is below a constant depending only on $\Omega$, the solution constructed in Theorem \ref{th:1} is necessarily unique in its class of existence.       
\par

{At this stage, it is worth mentioning that in the present work we do not tackle the question of the stability of the unique equilibrium configuration at low Reynolds in the present work. A stability analysis for a similar problem, namely in the case of a 2D infinite channel where the dynamics is driven by a Poiseuille velocity field at large distance of the solid, has been performed in \cite{BoHiPaSpe} while for the model \eqref{01} \textit{without} rotation it has been treated in \cite{BoGaGa1}. The presence of the rotation makes the method of \cite{BoGaGa1} inapplicable and requires a new approach which will be the subject of subsequent investigations.} 

The outline of the paper is the following. In Section 2 we provide a precise formulation of the problem and rewrite the relevant equations in a frame attached to the reference configuration. In Section 3, we define the equilibrium solutions and prove their existence by the method described above. Finally, Section 4 is dedicated to the proof of the uniqueness of equilibria.

\section{Formulation of the Problem and Functional Setting}

Consider a rigid body, $\mathscr B$, occupying the closure of the bounded domain $\Omega_0\subset\mathbb{R}^3$, completely immersed in a Navier-Stokes liquid, $\mathscr L$, that fills the entire three-dimensional space outside $\mathscr B$. We shall assume that $\Omega_0$ is of class $C^2$, even though some results, like \propref{1}, continue to hold without any regularity assumption. The flow of the liquid becomes equal to the velocity field $-V\tilde{\bfb}_\alpha$ at large distances from $\mathscr B$, where $V$ is a prescribed real number that, without loss, we assume to be positive, and $\tilde{\bfb}_\alpha\in S^2$, with $S^2\subset \mathbb{R}^3$  the unit sphere in $\mathbb{R}^3$. Precisely, 
  $$\tilde{\bfb}_\alpha=\cos\alpha\, \tilde\bfe_1 +\sin\alpha\, \tilde\bfb \quad \text{with} \quad \tilde\bfb = (0,\tilde b_2,\tilde b_3)\,,
  $$
 where the (given) angle $\alpha\in\mathbb{R}$ can be seen as an angle of attack, and $\tilde \bfb\in\mathbb{R}^3$ has lenght 1.  When $\alpha=0$, the flow is aligned with the axis of rotation of the body. At $90^\circ$, the flow is totally transverse. 
\par We suppose that, with respect to an inertial frame $\cali:=\{O',\bfe_1,\bfe_2',\bfe_3'\}$, $\mathscr B$  moves subject to the following force and torque: (a) a linear, possibly anisotropic, restoring force $\bfR=-{\mathbb A}\cdot\bfchi$ with ${\mathbb A}$ symmetric, positive definite matrix ({\em stiffness matrix}) and $\bfchi:=\bfy_G-\bfy_{O'}$ displacement of the center of mass $G$ of $\mathscr B$ with respect to  $O'$; (b) a restoring torque $-{k}\theta\bfe_1$ with ${k}>0$ (\textit{stiffness constant}) and $\theta$ angle counted with respect to the direction $\bfe_2'$ \emph{with its multiplicity} so that the restoring torque takes into account the number of full turns made by the body. We will assume that $\mathscr B$ is free to rotate only around  $\bfe_1$. 
 \par Let $\rho, \nu$ and $I$ be density and kinematic viscosity of the liquid and moment of inertia of $\mathscr B$ around $\tilde{\bfe}_1$, respectively. We scale  velocities by $V$, length by ${\sf d}:=(\rho/I)^{\frac15}$ and time by $d^2/\nu$. One can then show that the equations of motion of the coupled system body-liquid $\mathscr{S}$ when referred to $\cali$ are given in non-dimensional form by  \cite[(2.1)]{BoGaPa}
\be\left\{\ba{l}\medskip\left.\ba{rr}\medskip
\partial_t{\bfw}+\lambda{\bfw}\cdot\nabla{\bfw}&=\Delta{\bfw}-\nabla {{\sf p}}\\
& \Div{\bfw}=0\ea\right\}\ \ \mbox{in $\BCup{t\in(0,\infty)}{}\left(\calf(t)\times\{t\}\right)$}\,,\\ \medskip
\ \ {\bfw}(\bfy,t)={\bfeta}(t)+\lambda^{-1}\omega(t)\bfe_1\times({\bfy}-{\bfchi}(t))\,, \ \mbox{at $\BCup{t\in(0,\infty)}{}\left(\partial\calf(t)\times\{t\}\right)$}\,;\ \\ \medskip
\ \ \Lim{|{\bfy}|\to\infty}{\bfw}(\bfy,t)=- \tilde\bfb_\alpha\,,\ t\in(0,\infty)\,,\\
\medskip\left.\ba{lr}\medskip
\dot{{\bfeta}}+{\mathbb A}\cdot{\bfchi}+\mu\Int{\partial\calf(t)}{} \mathbb{T}({\bfw},{\sf p})\cdot\bfN=\0, \qquad
\\ \medskip
\dot{{\bfchi}}={{\bfeta}},\\ \medskip
\dot{\omega}+k\theta+\lambda\bfe_1\cdot\Int{\partial\calf(t)}{}({\bfy}-\bfchi)\times \mathbb{T}({\bfw},{\sf p})\cdot\bfN=0,\, \\
\medskip
\dot{\theta}={\omega},
\ea\right\}\ \ \mbox{in $(0,\infty)$,} \qquad 
\ea\right.
\eeq{01}
where  ${\bfw}$ and ${\sf p}$ are velocity and pressure fields of the liquid, 
$\mu:=\rho\, {\sf d}^3/M$, with $M$ mass of $\mathscr B$, and
\be 
\lambda:=\frac{V {\sf d}}\nu\
\eeq{lamb}
is the relevant Reynolds number in this model. 
Of course, all the quantities involved in \eqref{01} are understood to be non-dimensional.

The fluid domain $\calf(t)$ is
the domain occupied by $\mathscr L$ at time $t\ge0$. It depends on the position of the center of mass of the body and its orientation, i.e. 
\be
\calf(t)= \calf(\bfchi(t),\theta(t))=\left(\bfy_{ O'} + {\bfchi}(t) + \mathbb{Q}(\theta(t))\Omega_0\right)^\complement\,,
\eeq{F}
where $\mathbb Q(t)=\mathbb{Q}(\theta(t))$,  is the one-parameter
family of rotations around  $\bfe_1$ defined by 
\be
\mathbb{Q}(\theta(t)):=\left(\ba{ccc}\medskip 1 &0&0\\ \medskip
0&\cos\theta(t)&-\sin\theta(t)\\
0&\sin\theta(t)&\cos\theta(t)\ea\right)\,.
\eeq{Q}   
We also recall the standard notation $\mathbb T(\cdot,\cdot)$ that denotes the Cauchy stress tensor, that is
$$
\mathbb T(\bfz,\psi):=2\,\mathbb D(\bfz)-\psi\,\mathbb I\,,\ \ \ \mathbb D(\bfz):=\frac12\big(\nabla\bfz+(\nabla\bfz)^\top\big)\,\qquad  \forall\,(\bfz,\psi) \in \mathbb{R}^3\times\mathbb{R}\,,
$$
with $\mathbb I$ identity matrix, and  $\bfN$ the unit outer normal at $\calf$ directed toward $\mathscr{B}$.
\par 
Our aim is to study the existence and uniqueness of equilibria of \eqref{01}, where, by ``equilibrium" we mean a time-independent solution to \eqref{01}. 
\par
Instead of working with system \eqref{01}, which is set on a time-dependent fluid domain, we rather reformulate \eqref{01} into an equivalent system using a body-fixed frame. In this configuration, we denote by $\Omega$ the space filled by the liquid $\mathscr L$ for any possible position of the body, once we consider the origin of coordinates in the interior of the body, that is we take as a reference system $\mathcal{I}_G=\{G, \bfe_1,\bfe_2,\bfe_3\}$, 
where 
$$
(\bfe_1,\bfe_2,\bfe_3)=\mathbb{Q}^\top(\theta)\cdot(\bfe_1,\bfe'_2,\bfe'_3)\,.
$$
This clearly implies that any position vector $\bfx$ of a point $P$ in $\mathcal{I}_G$ is related to the same point $\bfy$ in $\mathcal{I}$ by the following relation:
$$
\bfy=\mathbb{Q}(\theta)\cdot \bfx+\bfchi\,.
$$ 
Given  $\bfn$ the unit outer normal at $\partial\Omega$ directed toward $\mathscr{B}$, proceeding as in \cite[\S\S 1 and 2.1]{Gah}
we then deduce  that $({\bfw}, {\sf p},\bfchi,\theta)$ solves \eqref{01} if and only if $(\bfv,{ p},\bfdelta,\theta)$ satisfies the following set of equations 
\be\left\{\ba{l}\medskip\left.\ba{rr}\medskip
\partial_t{\bfv}+\lambda({\bfv}-{\bsfV })\cdot\nabla{\bfv}+\dot{\theta}{\bfv}^\perp & =\Delta{\bfv}-\nabla { p}\\
& \Div{\bfu}  =0\ea\right\}\ \ \mbox{in $\Omega\times(0,\infty)$}\,,\\ \medskip
\ \ {\bfv}(\bfx,t)=\bsfV(\bfx,t):=\bfxi(t)+\lambda^{-1}{\omega}(t)\bfx^\perp\,, \ \mbox{at $\partial\Omega_0\times(0,\infty)$}\,;\ \\ \medskip
\ \ \Lim{|\bfx|\to\infty}{\bfv}(\bfx,t)=\bfb_\alpha(\theta)\,,\ t\in(0,\infty)\,,\\
\medskip\left.\ba{lr}\medskip
\dot{\bfxi}+\dot{\theta}\bfxi^\perp+{\mathbb B}(\theta)\cdot	\bfdelta+\mu\Int{\partial\Omega}{} {\mathbb T}({\bfv},{ p})\cdot\bfn=\0
\\ \medskip
\dot{\bfdelta}+\dot{\theta}\bfdelta^\perp={\bfxi}\\ \medskip
\dot{\omega}+k\theta+\lambda\,\bfe_1\cdot\Int{\partial\Omega}{}\bfx\times {\mathbb T}(\bfv,{ p})\cdot\bfn=0\, \\
\medskip
\dot{\theta}={\omega}
\ea\right\}\ \ \mbox{in $(0,\infty)$\,.}
\ea\right.
\eeq{01_nrs}
with 
\be
\bfb_\alpha(\theta):= \mathbb Q^\top(\theta)\cdot\tilde\bfb_{\alpha}\,, \qquad \mathbb B(\theta):=\mathbb Q^\top(\theta)\cdot\mathbb A\cdot\mathbb Q(\theta)\,.
\eeq{9_0}
It is clear that there exist $\rho_1,{\rho}_2>0$, depending on $\mathbb A$, such that 
\be
\rho_1|\bfdelta|^2\le \bfdelta\cdot\mathbb{B}\cdot \bfdelta\le {\rho}_2\,|\bfdelta|^2\,.
\eeq{bound_B}

\par To conclude this section, we briefly recall the relevant functional spaces that will be used throughout the paper. Given a domain $A\subset \mathbb{R}^3$, $1\le q\le\infty, m\in \mathbb{N}$, $W^{m,q}(A)$ denotes the usual Sobolev space with norm $\|\cdot\|_{m,q,A}$, and its subspace $W^{m,q}_0(A)$ given by the completion of $C^\infty_0(A)$ in the same norm. Clearly $L^q(A)=W^{0,q}(A)$ is the Lebesgue space with norm $\|\cdot\|_{q,A}$. 
Since problem \eqref{01_nrs} is set in an exterior domain, of particular importance will be the homogeneous Sobolev space $D^{m,q}=D^{m,q}(A)$, with semi-norm $\sum_{|l|=m}\|D^l u\|_{q,A}$.
Whenever confusion of domains does not occur, we will omit the corresponding subscript in the previous norms. Likewise for the $L^2$-scalar product $(\cdot,\cdot)=(\cdot,\cdot)_A$.
\par{Besides standard spaces, we shall use certain function classes that are characteristic of hydrodynamics problems related to a viscous incompressible fluids. In particular, we define
$$\cald(A):=\{\bfphi\in C_0^\infty(A):\Div\bfphi=0 \,\,\text{in}\,\, A\}\,.
$$
Then, $\cald_0^{1,2}(A)$ denotes completion of $\cald(A)$ in the $\|\nabla(\cdot)\|_{2,A}$-norm; moreover, $[\cdot,\cdot]_{A}:=\int_\Omega\nabla\bfu_1:\nabla\bfu_2$ is the scalar product in $\cald_0^{1,2}(A)$. If $\cala=\real^3$ or $\cala\subset\real^3$ is a bounded domain with $\bar{\Omega_0}\subset \cala$, we set
$$
\begin{aligned}
\mathcal{C}(\cala):=\{\bfphi\in C_0^\infty(\cala)\,:\, &\Div\bfphi=0 \,\,\text{in}\,\, \cala\,\quad \\& \text{and} \quad \exists\,(\hat{\varphi},\hat{\beta})\in\mathbb{R}^3\times\mathbb{R}\,\,\text{s.t.}\, \bfphi(\bfx)=\hat{\varphi}+\hat{\beta}\bfx^\perp\,\text{in a neighborhood of $\Omega_0$}\}\,.
\end{aligned}
$$
and define
\be
\mathcal{K}(\cala)\,:=\ \left\{\mbox{completion of $\mathcal C(\cala)$  in the $\|\nabla(\cdot)\|_{2}$-norm}\right\}\,.
\eeq{Ka}
}
{More details on the functional framework can be retrieved from \cite{Ana-Paolo_Lady}.}
\section{Existence of Equilibria}\label{sec:equilibria}
{We look for equilibria of the coupled system $\mathscr S$}. These states occur when (and only when) the liquid is in a steady regime and the structure occupies a time-independent configuration.  
Mathematically, from \eqref{01_nrs}, finding steady states means that we 
must find functions $(\bfv,p,\bfdelta,\theta)$  such that, for all $\bfx\in\mathbb{R}^3$,
$$
\bfv=\bfv(\bfx)\in\mathbb{R}^3\,,\, p=p(\bfx)\in \mathbb{R}\,,\, \bfdelta\in \real^3\,,\ \theta \in\real\,,
$$
satisfy the following set of (dimensionless) equations 
\be\left\{\ba{lc}\medskip\left.\ba{ll}\medskip
{\lambda}\,\bfv\cdot\nabla\bfv=\Delta\bfv-\nabla p\\
\qquad\qquad\quad\Div\bfv=0\ea\right\}\ \ \mbox{in $\Omega$}\\
\bfv(\bfx)=\0 \ \mbox{at $\partial\Omega$}\,;\ \ \Lim{|\bfx|\to\infty}\bfv(\bfx)=-\bfb_\alpha(\theta)\,,
\ea\right.
\eeq{7}
\be\left\{\ba{ll}\medskip
\mathbb B(\theta)\cdot\bfdelta+\mu\Int{\partial\Omega}{}\mathbb T(\bfv,p)\cdot\bfn=\0\\
k\theta+\lambda\,\bfe_1\cdot\Int{\partial\Omega}{}\bfx\times\mathbb T(\bfv,p)\cdot\bfn=0\,.
\ea\right.
\eeq{8}

The main goal of the initial part of this paper is to show that, for any $\lambda>0$ problem \eqref{7}--\eqref{8} admits at least one smooth solution, satisfying suitable summability conditions at ``large" spatial distances. This is exactly the content the following theorem.
\begin{theorem}\label{th:1}
	For any $\lambda>0$, there is at least one solution $(\bfv,p,\bfdelta,\theta)\in C^\infty(\Omega)\times C^\infty(\Omega)\times\real^3\times\real$ to \eqref{7}--\eqref{8} that, in addition, satisfies 
	\be
	(\bfv+\bfb_\alpha(\theta),p) \in [L^q(\Omega)\cap D^{1,r}(\Omega)\cap D^{2,s}(\Omega)]\times[L^{\sigma}(\Omega)\cap D^{1,s}(\Omega)]\,,
	\eeq{REG}
	for all $q\in (2,\infty]$, $r\in(\frac43,\infty]$, $s\in (\frac32,\infty]$, $\sigma\in (1,\infty)$. 
\end{theorem}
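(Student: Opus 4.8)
The plan is to recast the stationary coupled problem \eqref{7}--\eqref{8} as a fixed-point problem and solve it by a Leray--Schauder/Schaefer argument carried out on an exhausting sequence of bounded domains. First I would remove the non-homogeneous condition at infinity by setting $\bfu := \bfv + \bfb_\alpha(\theta)$, so that $\bfu\to\0$ as $|\bfx|\to\infty$ and $\bfu$ solves an Oseen-type system with drift $-\lambda\,\bfb_\alpha(\theta)\cdot\nabla\bfu$ and convective term $\lambda\,\bfu\cdot\nabla\bfu$, subject to $\bfu=\bfb_\alpha(\theta)$ on $\partial\Omega$. The two finite-dimensional relations \eqref{8} are then encoded weakly by testing with fields in the class $\mathcal{K}$ of \eqref{Ka}, whose elements are solenoidal and coincide with an infinitesimal rigid motion $\hat\varphi+\hat\beta\,\bfx^\perp$ near $\mathscr B$: choosing such a test field recovers the balance of linear momentum (the $\hat\varphi$-component, carrying the restoring term $\mathbb B(\theta)\cdot\bfdelta$) and the balance of angular momentum about $\bfe_1$ (the $\hat\beta$-component, carrying $k\theta$). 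Observe that the first equation in \eqref{8} determines $\bfdelta$ explicitly once $(\bfv,p,\theta)$ are known, since $\mathbb B(\theta)$ is invertible by \eqref{bound_B}; hence the genuine unknowns reduce to the flow and the single scalar $\theta$, the latter fixed by the torque balance.

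Next comes the invading-domains step. For each $k\in\nat$ I would replace $\Omega$ by $\Omega_k:=\Omega\cap B_k$, impose the homogeneous condition $\bfu=\0$ on the fictitious boundary $\partial B_k$, and work in the Hilbert space $\mathcal{K}(\Omega_k)$. On the bounded domain the solution operator of the linear Stokes/Oseen part is well defined, and the map $\mathcal{T}_k$ that sends a trial field to the solution of the problem with the convective term and the $\theta$-dependence frozen is compact, because the embedding $\cald_0^{1,2}(\Omega_k)\hookrightarrow L^2(\Omega_k)$ is compact (Rellich), so that $\bfu\mapsto\bfu\cdot\nabla\bfu$ is a compact perturbation, while the update of $\theta$ through the torque functional is a continuous finite-dimensional operation. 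This compactness is exactly what allows an application of Schaefer's theorem: a solution $\mathsf{s}_k=(\bfu_k,p_k,\theta_k)$ exists in $\Omega_k$ provided the set of solutions of $\mathsf{s}=t\,\mathcal{T}_k(\mathsf{s})$, $t\in[0,1]$, is bounded uniformly in $t$.

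The a priori bound is the crux. I would test the weak formulation with the solution itself, reduced by a solenoidal extension of the boundary datum, and exploit the algebraic structure of the coupling: the trilinear convective form vanishes on the diagonal for solenoidal fields vanishing on the boundary, the Oseen drift and the lift cross-terms are controlled by the viscous dissipation, and, crucially, the boundary stress integrals produced by integration by parts cancel against the rigid components of the test field, leaving only the non-negative restoring contributions associated with $\mathbb B(\theta)$ and with $k\theta^2$. This yields a bound on $\|\nabla\bfu_k\|_2$, and hence on $|\theta_k|$ and $|\bfdelta_k|$, depending only on $\Omega$ and $\lambda$ but independent of both $t$ and $k$. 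Establishing this cancellation while the far-field datum $\bfb_\alpha(\theta)$ itself depends nonlinearly on the unknown $\theta$ through $\mathbb Q^\top(\theta)$ in \eqref{9_0}, and showing that the resulting torque feedback cannot destroy the estimate, is the main obstacle; it is precisely the highly nonlinear, non-variational character of the rotational coupling that obstructs a one-shot energy argument and forces the degree-theoretic detour.

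With the $k$-uniform bound in hand, the sequence $\{\mathsf{s}_k\}$ is bounded in $\cald_0^{1,2}$ and, after extension by zero, admits a subsequence converging weakly in $\cald_0^{1,2}(\Omega)$ and strongly in $L^2_{\mathrm{loc}}$, with $\theta_k\to\theta$ in $\real$; the strong local convergence handles the quadratic term $\bfu\cdot\nabla\bfu$ and the convergence of $\theta_k$ passes to the limit in $\bfb_\alpha(\theta_k)$, so the limit is a weak solution of \eqref{7}--\eqref{8} in $\Omega$. Finally I would upgrade regularity and summability: interior elliptic regularity for the stationary system gives $(\bfv,p)\in C^\infty(\Omega)$, while the sharp summability \eqref{REG} follows from the known $L^q$--$D^{1,r}$--$D^{2,s}$ estimates for the exterior stationary Oseen problem, valid because $\bfb_\alpha(\theta)\neq\0$ when $V>0$ makes the linearisation at infinity genuinely of Oseen type with its wake decay, the convective term being absorbed as a right-hand side through a bootstrap on the exponents.
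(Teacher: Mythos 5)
Your overall architecture is essentially the paper's: lift the far-field datum, eliminate $\bfdelta$ through the invertibility of $\mathbb B(\theta)$, encode the torque balance by pairing with solenoidal fields that are rigid near $\mathscr B$ (the paper's $\bfh$ in \eqref{h1}--\eqref{16} is exactly such a field), then invading domains plus Schaefer, and finally Galdi's exterior-domain theory for the $C^\infty$ smoothness and \eqref{REG}. The genuine gap sits in the step you yourself call the crux, the a priori bound uniform in $t$ and $k$. After subtracting a solenoidal extension $\bfU$ of the boundary datum, the energy identity contains the term $\lambda\,(\bfu\cdot\nabla\bfu,\bfU)$, which is \emph{quadratic} in the unknown and carries the factor $\lambda$; for a generic extension this is only bounded by $c\,\lambda\,\|\nabla\bfu\|_2^2$, which cannot be absorbed by the dissipation once $\lambda$ is large --- and the theorem claims existence for \emph{all} $\lambda>0$. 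The paper's \lemmref{1} exists precisely to neutralize this term: the extension is of Hopf type, supported in a thin layer $\{d(\bfx)\le e^{-1/\varepsilon}\}$ around $\partial\Omega$, and Hardy's inequality yields the cross-term estimate (property (vi)) with a constant proportional to $\varepsilon$, so that $\lambda\,|(\bfu\cdot\nabla\bfu,\bfU)|$ can be absorbed into $\|\nabla\bfu\|_2^2$ (see \eqref{35}--\eqref{37}). Your phrase ``the lift cross-terms are controlled by the viscous dissipation'' is exactly the statement that fails without this construction, and your proposal never builds it.

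The second flaw is the cancellation structure you invoke for the solid part: it does not exist in the steady problem. The solution itself vanishes on $\partial\Omega$ (the body is at rest in the body frame), so testing the momentum equation with the solution produces \emph{no} boundary stress integrals and hence no terms $\bfdelta\cdot\mathbb B(\theta)\cdot\bfdelta$ or $k\theta^2$; there is no steady energy identity in which the restoring force and torque appear as non-negative contributions --- that structure belongs to the evolutionary problem \eqref{01_nrs}, not to \eqref{7}--\eqref{8}. What the paper does instead is rewrite the torque balance as a volume functional, $\theta=\hat\lambda F_\lambda(\bfu;\bfb_\alpha)$, by pairing the equation with the field $\bfh$ equal to $\bfe_1\times\bfx$ near the body (see \eqref{16}--\eqref{17}), and then bounds $|\theta|$ \emph{a posteriori} from the already-established gradient bound, by estimating $F_\lambda$ directly (see \eqref{37}--\eqref{38}); the $\theta$-dependence of the datum $\bfb_\alpha(\theta)$ is handled through the Lipschitz continuity of the lifting map (\lemmref{1}(vii)) in the compactness argument, not through any sign structure. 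So the mechanism you propose for the key estimate is not merely under-detailed --- as described it would fail for large $\lambda$ and relies on a cancellation that is absent; repairing it requires precisely the two devices (the Hopf lifting with small cross-term constant, and the volume reformulation $F_\lambda$ of the torque) that constitute the paper's actual proof.
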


\par   
The proof of Theorem \ref{th:1} is posponed to the end of this section. {It will be an immediate consequence of an auxiliary result, contained in Proposition \ref{prop:1}. In order to state it, }  we need some preparatory work.
  
To this end, we begin to observe that,  to show existence for the full problem \eqref{7}--\eqref{8}, it is enough to prove the same property for the reduced problem \eqref{7}, \eqref{8}$_2$,  since, once $(\bfv,p,\theta)$ is found, then $\bfdelta$ is directly given by
\be
\bfdelta:=-\mu\,\mathbb  B^{-1}(\theta)\cdot\Int{\partial\Omega}{}\mathbb T(\bfv,p)\cdot\bfn\,,
\eeq{delta}
provided $(\bfv,p)$ has enough regularity as to guarantee that the surface integral  be well-defined. 
The same simplification arises if $\alpha=0$ (or $\alpha=\pi$). Indeed, then $\bfb_\alpha=\bfe_1$ ($\bfb_\alpha=-\bfe_1$) is $\theta$-independent and 
once we have $(\bfv,p)$ solving \eqref{7}, then $\theta$ is given by
$$
\theta = -\frac\lambda k\,\bfe_1\cdot\Int{\partial\Omega}{}\bfx\times\mathbb T(\bfv,p)\cdot\bfn\,,
$$
provided again that $(\bfv,p)$ has enough regularity as to guarantee that the surface integral  be well-defined. 
With this in mind, we assume $\alpha\ne 0\ ({\rm mod}\, \pi)$. From this point onward, we will suppress the explicit dependence on $\theta$ in $\bfb_\alpha$, unless  strictly necessary. 
\par
Thus, existence of equilibria is reduced to prove existence for the following system of equations
\be\ba{cc}\medskip\left.\ba{ll}\medskip
\lambda(\bsfu-\bfb_\alpha)\cdot\nabla\bsfu=\Delta\bsfu-\nabla p\\\qquad \qquad \qquad\quad\quad
\Div\bsfu=0\ea\right\}\ \ \mbox{in $\Omega$}\\
\bsfu(\bfx)=\bfb_\alpha\ \mbox{at $\partial\Omega$}\,;\ \ \Lim{|\bfx|\to\infty}\bsfu(\bfx)=\0
\ea
\eeq{10}
\be\ba{ll}\medskip
 \theta+\hat{\lambda}\,\bfe_1\cdot\Int{\partial\Omega}{}\bfx\times\mathbb T(\bsfu,p)\cdot\bfn=0\,,
\ea
\eeq{11}
with
$$
\bsfu:=\bfv+\bfb_\alpha\,,\ \ \hat{\lambda}:=\frac\lambda k\,.
$$
\begin{remark}
As mentioned previously, the angle $\theta\in \real$ is counted with its multiplicity, which describes the number of complete rotations, $n\in \mathbb Z$, around $\bfe_1$ necessary for the body to be in the equilibrium configuration corresponding to the given $\lambda$. Thus, $\theta=\theta_1+2n\pi$, with $\theta_1\in [0,2\pi)$. {One can prove that $\theta$ is small if $\lambda$ is sufficiently ``small" (so that $n=0$ in that case) but we cannot exclude that $n\ne 0$ when $\lambda$ is large. }
\end{remark}
\par
Objective of this section is to show that problem \eqref{10}--\eqref{11} admits at least one weak solution (suitably defined), for arbitrary values of $\lambda$. 
In this regard,
we begin to prove a general result concerning the lift of boundary data, in the sense provided by the next lemma. This proof is along the same lines of that given in \cite[Lemma 2.3]{BoGa}, but with difference in details, therefore we report it here for the sake of completeness. 

To this purpose, we introduce, for any $R>R_*=:\text{diam}(\Omega_0)$, the domains 
$$
\Omega_R:=\Omega\cap B_R\,,
$$
with $B_r:=\{\bfx\in\mathbb{R}^3\,:\, |\bfx|<r\,,r>0\}$. Then, the following result holds.

\Bl There exists a linear, continuous map 
$$
\bfU:\bfa\in S^2\mapsto \bfU(\bfa)\in W^{2,2}(\Omega)
$$  
with the following properties:
\begin{itemize}
   \item [{\rm (i)}] There is $\rho_0>R_*$, such that   $\supp(\bfU(\bfa))\subset \Omega_{\rho_0}$, for all $\bfa$;
 \item [{\rm (ii)}] $\bfU=\bfa$\,,\ \  in a neighborhood of $\partial\Omega$; 
\item [{\rm (iii)}] $\Div\bfU=0\ \ \mbox{in}\ \Omega$;
 \item [{\rm (iv)}]  $\|\bfU\|_{m,q}\le \gamma$, for all $(m,q)\in \nat\times [1,\infty]$, with $\gamma>0$ depending only on $\Omega,m,q$; 
  \item [{\rm (vi)}] For any $\bfw \in W^{1,2}(\Omega)$, $\bfz\in \cald_0^{1,2}({\Omega})$,

$$
\int_{{\Omega}}\left|\bfw\cdot\nabla\bfz\cdot\bfU\right|\le \half\,\|\nabla\bfw\|_2\|\nabla\bfz\|_2\,.
$$
\item [{\rm (vii)}] There exists $c>0$, depending on $\Omega$, such that 
$$
\|\bfU(\bfa_1)-\bfU(\bfa_2)\|_{2,2}\le c|\bfa_1-\bfa_2|\,,\ \ \forall\,\bfa_1,\bfa_2\in S^2\,.
$$
\end{itemize}
\label{lemma:lifting}
\EL{1} 
\begin{proof} Let $\psi:r\in (0,\infty)\mapsto [0,\infty)$ be a smooth, non-decreasing  function such that $\psi(r)=0$ if $r\le1$, and $\psi(r)=1$, if $r\ge2$, and, for a given $\varepsilon>0$ (to be fixed later), set
$$
\phi(\varepsilon;\bfx)=\psi(-\varepsilon \ln d(\bfx)),
$$
where $d(\bfx):=\dist(\bfx,\partial\Omega)$.
The following properties are easily checked:
$$
\phi(\varepsilon;\bfx)=\left\{\ba{ll}\medskip 1 & \mbox{if}\ d(\bfx)\le {\rm e}^{-2/\varepsilon}\\
0 & \mbox{if}\ d(\bfx)\ge {\rm e}^{-1/\varepsilon}
\ea\right.\,,$$
\be\ba{cc}\medskip
 \supp(\phi)\subset  \{\bfx\in\Omega: \,d(\bfx)\le {\rm e}^{-1/\varepsilon}\}=:\Omega_\varepsilon\,,\\ \supp(\nabla\phi)\subset \{\bfx\in\Omega: \,{\rm e}^{-2/\varepsilon}\le d(\bfx)\le {\rm e}^{-1/\varepsilon}\}\,,
\ea
\eeq{suP}
and, moreover,
\be
|\nabla\phi(\varepsilon;\bfx)|\le \frac{c\,\varepsilon}{d(\bfx)}\,,
\eeq{1.2}
with $c$ independent of $\varepsilon$. For a given $\bfa\in S^2$, any $\bfx=(x_1,x_2,x_3)\in\mathbb{R}^3$, let
\be
\bsfU(\bfx;\bfa):=x_3a_2\bfe_1+x_1a_3\bfe_2+x_2a_1\bfe_3\,,
\eeq{U}
and define
$$
\bfU(\bfx;\bfa):=\curl(\phi(\varepsilon;\bfx)\bsfU(\bfx;\bfa))\,,
$$
from which the validity of (iii) as well as the linearity property  of the map $\bfa\mapsto \bfU(\bfa)$  follow at once.
Since $\curl\bsfU=\bfa$, we get
\be
\bfU(\bfx;\bfa)=\phi(\varepsilon;\bfx)\bfa-\bsfU(\bfx;\bfa)\times\nabla\phi(\varepsilon;\bfx)\,.
\eeq{1.3}
Thus, {we infer from \eqref{suP}-\eqref{1.3} that}
\be
\supp(\bfU)\subset \Omega_\varepsilon\,,\ \ \bfU=\bfa\ \,\mbox{in $\Omega_{\varepsilon/2}$}\,,
\eeq{SuP}
and also
\be
\|\bfU\|_{2,2}\le \gamma_\varepsilon \,;\ \ \|\bfU(\bfa_1)-\bfU(\bfa_2)\|_{2,2}\le \gamma_\varepsilon|\bfa_1-\bfa_2|\,,\ \ \bfa_1,\bfa_2\in S^2\,,
\eeq{1.3_0}
{where $\gamma_\varepsilon$ is positive constant } depending only on $\Omega$ and $\varepsilon$. 
Next, by Schwarz inequality,  \eqref{1.2}-\eqref{1.3}, and observing that $\sup_{\Omega_\varepsilon}|\bsfU|\le c$,
we deduce  
\be\ba{rl}\medskip
\left(\Int{{\Omega}}{}\left|\bfw\cdot\nabla\bfz\cdot\bfU\right|\right)^2 \le \|\nabla\bfz\|_2^2\Int{{\Omega_\varepsilon}}{}|\bfU|^2|\bfw|^2
& \le c\,\|\nabla\bfz\|_2^2\Int{{\Omega_\varepsilon}}{}\left(\phi^2|\bfw|^2+\varepsilon^2\Frac{|\bfw|^2}{d^2}\right)\\ \medskip
&\le c\,\|\nabla\bfz\|_2^2\Int{{\Omega_\varepsilon}}{}\left(d^2+\varepsilon^2\right)\Frac{|\bfw|^2}{d^2}\\ & \le c\,\varepsilon^2\,\|\nabla\bfz\|_2^2\Int{{\Omega_\varepsilon}}{}\Frac{|\bfw|^2}{d^2}\,,
\ea
\eeq{1.4} 
where $c$ is independent of $\varepsilon$.
{Using Hardy inequality, see e.g. \cite[Lemma III.6.3]{Gab}, in the last integral}, we conclude that 
$$
\left(\Int{{\Omega}}{}\left|\bfw\cdot\nabla\bfz\cdot\bfU\right|\right)^2 
\le c\,\varepsilon^2\,\|\nabla\bfz\|_2^2\Int{{\Omega_\varepsilon}}{}|\nabla\bfw|^2,
$$
which, after choosing $\varepsilon=1/(2\sqrt{c})$, proves the property (vi). Finally, (i),  (ii),  (iv) and (vii) as well as the continuity property are a consequence of this choice of $\varepsilon$ and \eqref{SuP}-\eqref{1.3_0}. 
\end{proof}
\par
Set 
$\bsfu:=\bfu+\bfU(\bfb_\alpha)$. Then, problem \eqref{10}--\eqref{11} becomes
\be\left\{\ba{ll}\medskip\left.\ba{rr}\medskip
\lambda\left((\bfu-\bfb_\alpha)\cdot\nabla\bfu+\bfU\cdot\nabla\bfu+\bfu\cdot\nabla\bfU\right) & =\Delta\bfu-\nabla p+\bff_\lambda(\bfb_\alpha) \\
&  \Div\bfu =0\ea\right\}\ \ \mbox{in $\Omega$}\\
\ \ \bfu(\bfx)=\0 \ \mbox{at $\partial\Omega$}\,;\ \ \Lim{|\bfx|\to\infty}\bfu(\bfx)=\0
\ea\right.
\eeq{12}
\be\ba{ll}\medskip
 \theta+\hat{\lambda}\,\bfe_1\cdot\Int{\partial\Omega}{}\bfx\times\mathbb T(\bfu,p)\cdot\bfn=0\,, & \hspace{5.2cm}\
\ea
\eeq{13}
where
\be
\bff_\lambda(\bfb_\alpha):=-\lambda(\bfU{-}\bfb_\alpha)\cdot\nabla\bfU+\Delta\bfU\,.
\eeq{14}
We shall next put \eqref{13} in a different (equivalent) form.
  Let $\varphi: r\in (0,\infty)\to [0,\infty)$  be a smooth cut-off such that $\varphi(r)=1$ if $r\le 1$, and $\varphi(r)=0$, if $r\ge 2$. For  a fixed $\rho>R_*$, set
\be
\bfh(\bfx):={\varphi}(|\bfx|/\rho)\bfe_1\times\bfx  = {\varphi}(|\bfx|/\rho)(0,-x_3,x_2)\,.
\eeq{h1}
After noticing that $\nabla{\varphi}(|\bfx|/\rho)\times\bfx=\0$, one easily shows that $\bfh$ satisfies the following properties 
\be
\bfh\in C^\infty(\Omega)\,;\ \ \Div\bfh=0\,\ \mbox{in}\,\Omega\,;\ \ \bfh(\bfx)\equiv\0\,\ \mbox{for}\, |\bfx|\ge 2\rho\,.
\eeq{h} 
If we dot-multiply both sides of \eqref{12}$_1$ by $\bfh$,  integrate by parts over $\Omega$ and use
 \eqref{h}, we get
\be
\begin{aligned}
\bfe_1\cdot\int_{\partial\Omega}\bfx\times\mathbb T(\bfu,p)\cdot\bfn&=\Int{\hat{\Omega}}{}\left\{\left[\lambda\left((\bfu-\bfb_\alpha(\theta))\cdot\nabla\bfu+{\bfU\cdot\nabla\bfu+\bfu\cdot\nabla\bfU}\right)-\bff_\lambda\right]\cdot\bfh+\mathbb D(\bfu):\mathbb D(\bfh)\right\}\\&=:-F_\lambda(\bfu;\bfb_\alpha)\,,
\end{aligned}
\eeq{16}
where $\hat{\Omega}$ is a fixed {\em bounded} domain containing $\supp(\bfh)$.
As a result, problem \eqref{12}--\eqref{13} can be restated as follows:
\be\left\{\ba{lc}\medskip\left.\ba{lr}\medskip
\lambda\left((\bfu{-}\bfb_\alpha)\cdot\nabla\bfu+\bfU\cdot\nabla\bfu+\bfu\cdot\nabla\bfU\right)&=\Delta\bfu-\nabla p+\bff_\lambda(\bfb_\alpha)\\
&\Div\bfu=0\ea\right\}\ \ \mbox{in $\Omega$}\\ \medskip
\ \ \bfu(\bfx)=\0 \ \mbox{at $\partial\Omega$}\,;\ \ \Lim{|\bfx|\to\infty}\bfu(\bfx)=\0\,;\\
 \ \ \theta=\hat{\lambda}\,F_\lambda(\bfu;\bfb_\alpha)\,,
\ea\right.
\eeq{17}
with  $\bfb_\alpha$ given in \eqref{9_0}, $\bfU=\bfU(\bfb_\alpha)$, $\bfh$  in \lemmref{1} and \eqref{h}, and $\bff_\lambda$, $F_\lambda$  in \eqref{14} and \eqref{16}.\par
\smallskip\par

For problem \eqref{17} we give the following definition of weak solution.
\Bd The pair $(\bfu,\theta)$ is a {\em weak solution} to \eqref{17} if
\begin{itemize}
\item[{\rm (i)}] $(\bfu,\theta)\in \cald_0^{1,2}(\Omega)\times\real$\,;
\item[{\rm (ii)}] $(\bfu,\theta)$ satisfies
\be\left\{\ba{rl}\medskip
\left[\bfu,\bfphi\right]&=-\left(\lambda\left((\bfu{-}\bfb_\alpha)\cdot\nabla\bfu+\bfU\cdot\nabla\bfu+\bfu\cdot\nabla\bfU\right)-\bff_\lambda(\bfb_\alpha),\bfphi\right)\,,\ \ \forall\,\bfphi\in\cald(\Omega)\,,\\
\theta&=\hat{\lambda}F_\lambda(\bfu;\bfb_\alpha)\,,
\ea\right.
\eeq{18}  
where $[\cdot,\cdot]\equiv [\cdot,\cdot]_{\Omega}$ and $(\cdot,\cdot)\equiv (\cdot,\cdot)_{\Omega}$. 
\end{itemize}


\EDD{1}
\Br Observing that $\cald^{1,2}_0(\Omega)\subset W^{1,2}({\Omega}_R)$, for all $R>R_*$, and taking into account the properties of $\bfU$ listed in \lemmref{1} and those of $\bfh$ in \eqref{h}, it is easy to show that all integrals in \eqref{18} are well-defined.
\ER{1}
\par
We are now ready to state:
\Bp
For any given $\lambda\in\real$, there exists at least one weak solution to \eqref{17}. This solution obeys the estimate
$$
\|\nabla\bfu\|_2+|\theta|\le C(\lambda)
$$
with $C(\lambda)$ depending only on $\Omega$ and $\lambda$. 
\label{prop:1}
\EP{1}

\par
The proof of \propref{1} will be achieved  by the ``invading domains" approach, tracing back to the work of {\sc Leray} \cite[\S\,17]{Leray}. As is well-known, this method relies on the following strategy. We  first formulate  a suitably modified version of problem \eqref{17} on each element of an increasing sequence of bounded domains, $\{\Omega_n\}$, whose union coincides with $\Omega$. Then, we will prove that the sequence of corresponding solutions, in appropriate norms, is uniformly bounded with respect to $n$. The final step consists in securing that a cluster point of this sequence  is indeed  a weak solution to the original problem \eqref{17}.
With this strategy in mind, the next subsection will be entirely devoted to the study of the truncated problem, while Section \ref{sec:2.2} deals with the limit as $n\to\infty$, that completes the proof of \propref{1}.

 We conclude this preliminary section by observing that, actually, once the validity of \propref{1} is established, Theorem \ref{th:1} {directly follows as we show next.}

\begin{proof}[Proof of Theorem \ref{th:1}] {It is standard to check that } $\bfv:=\bfu-\bfb_\alpha+\bfU(\bfb_\alpha)$ is a weak solution to the problem \eqref{7}$_1$
	in the sense of \cite[Definition X.1.1]{Gab}. Therefore, the $C^\infty$-property follow from \cite[Theorem X.1.1]{Gab}. Moreover, \cite[Theorem X.6.4]{Gab} ensures the validity of \eqref{REG}, and,  by trace theorems, that the surface integrals in \eqref{8} are well-defined. 
By using standard arguments (see \cite[Theorem X.1.1]{Gab}), one then retrieves that the weak solution $(\bfu,\theta)$  satisfies  \eqref{12}--\eqref{14} in classical sense. Finally, $\bfdelta$ is deduced from \eqref{delta}. 
\end{proof}

\subsection{Resolution in Bounded Domains} 
Let $R>0$ be such that $\Omega_R$ 
 contains the support of the function $\bfh$ defined in \eqref{h}. In
such $\Omega_R$ we then consider the following  problem 
\be\left\{\ba{lc}\medskip\left.\ba{lr}\medskip
\lambda\left((\bfu{-}\bfb_{\alpha})\cdot\nabla\bfu+\bfU\cdot\nabla\bfu+\bfu\cdot\nabla\bfU\right)& =\Delta\bfu-\nabla p+\bff_\lambda(\bfb_\alpha)\\
& \Div\bfu=0\ea\right\}\ \ \mbox{in $\Omega_R$}\\ \medskip
\bfu(\bfx)=\0 \ \mbox{at $\partial\Omega$}\,,\ \ \bfu=\0 \ \mbox{at $\partial B_R$}\,;\\
 \theta=\hat{\lambda}\,F_\lambda(\bfu;\bfb_\alpha)\,.
\ea\right.
\eeq{24}
In analogy with \defref{1},  we give the following definition of weak solution to problem \eqref{24}.
\Bd The pair $(\bfu,\theta)$ is a {\em weak solution} to \eqref{24} if
\begin{itemize}
\item[{\rm (i)}] $(\bfu,\theta)\in \cald_0^{1,2}(\Omega_R)\times\real$\,;
\item[{\rm (ii)}] $(\bfu,\theta)$ satisfies the following equations ($[\cdot,\cdot]\equiv [\cdot,\cdot]_{\Omega_R}$; $(\cdot,\cdot)\equiv (\cdot,\cdot)_{\Omega_R}$):
\be\ba{rl}\medskip
\left[\bfu,\bfphi\right]&=-\left(\lambda\left((\bfu{-}\bfb_\alpha)\cdot\nabla\bfu+\bfU\cdot\nabla\bfu+\bfu\cdot\nabla\bfU\right)-\bff_\lambda(\bfb_\alpha),\bfphi\right)\,,\ \ \forall\,\bfphi\in\cald^{1,2}_0(\Omega_R)\,,\\
\theta&=\hat{\lambda}F_\lambda(\bfu;\bfb_\alpha)\,.
\ea
\eeq{25}   
\end{itemize}
\EDD{2} 
Using a fixed point argument, we shall  show existence of weak solutions to \eqref{24} for arbitrary $\lambda$, which, in addition, satisfy suitable bounds, uniformly with respect to $R$. This will allows us to let $R\to\infty$ along a sequence and prove that the corresponding solutions (along a subsequence) will converge  to a weak solution of the original problem \eqref{17}. The accomplishment of the last two properties constitutes the crucial point of the entire approach.\par 
In order to implement the above procedure, we begin to rewrite \eqref{25} as an operator equation in $\cald_0^{1,2}(\Omega_R)\times\real$. In this respect, we need the following result.
\Bl  There exists a map
$$
\bfN:(\bfu,\theta)\in \cald_0^{1,2}(\Omega_R)\times \real\mapsto\bfN(\bfu,\theta)\in\cald_0^{1,2}(\Omega_R)
$$
such that $\bfu$ solves \eqref{25}$_1$ if and only if 
\be
\bfu=\bfN(\bfu,\theta)\ \ \mbox{in $\cald_0^{1,2}(\Omega_R)$}\,.
\eeq{26}
\EL{NL}%
\begin{proof}
By using H\"older  and Sobolev inequalities, we get
$$
|(\bfu\cdot\nabla\bfu,\bfphi)|\le \|\bfu\|_4\|\nabla\bfu\|_2\|\bfphi\|_4\le c\|\nabla\bfu\|_2^2\|\nabla\bfphi\|_2\,. 
$$
Likewise, also with the help of (iv) and (i) in \lemmref{1} and Poincar\'e inequality, we show
$$
|\left((\bfb_\alpha{-}\bfU)\cdot\nabla\bfu+\bfu\cdot\nabla\bfU,\bfphi\right)|\le \big(1+c_1\sup_{\bfx\in\Omega_{\rho_0}}(|\bfU(\bfx)|+|\nabla\bfU(\bfx)|)\|\nabla\bfu\|_2\big)\|\bfphi\|_2\le c_2\,\|\nabla\bfu\|_2\|\nabla\bfphi\|_2\,,
$$
with $c_1,c_2$ depending only on $\Omega$. Finally, recalling the definition \eqref{14} and employing again (i) and (iv) in \lemmref{1}, in conjunction with Sobolev and Poincar\'e inequalities, we deduce
$$
|(\bff_\lambda(\bfb_\alpha),\bfphi)|\le 	\lambda\sup_{\bfx\in\Omega_{\rho_0}}(|\bfU(	\bfx)|+|\nabla\bfU(\bfx)|+\tfrac{1}\lambda|D^2\bfU(\bfx)|)\|\bfphi\|_2\le c_3 \|\nabla\bfphi\|_2\,,
$$
where $c_3$ depends on $\lambda,\Omega$ . In view of all the above, we may conclude that the right-hand side of \eqref{25}$_1$ defines a (linear) bounded functional on $\cald_0^{1,2}(\Omega_R)$, if $(\bfu,\theta)\in\cald_0^{1,2}(\Omega_R)\times \real$. Consequently, by Riesz Theorem, for any such $(\bfu,\theta)$, there exists an element $\bfN(\bfu,\theta)\in\cald_0^{1,2}(\Omega_R)$ such that
\be
[\bfN(\bfu,\theta),\bfphi]=-\left(\lambda\left((\bfu{-}\bfb_\alpha)\cdot\nabla\bfu+\bfU\cdot\nabla\bfu+\bfu\cdot\nabla\bfU\right)-\bff_\lambda(\bfb_\alpha),\bfphi\right)\,,\ \ \forall\,\bfphi\in\cald^{1,2}_0(\Omega_R)\,.
\eeq{27}
The latter, in combination with \eqref{25}$_1$, proves the lemma.
\end{proof}
Let  
$$
X:=\cald_0^{1,2}(\Omega_R)\times\real\,, 
$$
and set
$$
N: X\times\real\to \real: (\bfu,\theta)\mapsto N(\bfu,\theta):=\hat{\lambda}F_\lambda(\bfu;\bfb_\alpha)
$$
with $F$ defined in \eqref{16}. Now,
in view of \lemmref{NL}, we can  write \eqref{25} in an operator form.  In fact, let
$$
{\sf u}:=\left(\ba{cc}\medskip \bfu\\ \theta\ea\right)\,,
$$
and 
$$
{\sf M}: X\to X: {\sf u}\mapsto  {\sf M}({\sf u}):=\left(\ba{cc}\medskip \bfN(\bfu,\theta)\\ N(\bfu,\theta)\ea\right)\,.
$$
Then, \eqref{25} can be equivalently written as follows
\be
{\sf u}={\sf M}({\sf u})\ \ \mbox{in $X$.}
\eeq{28}
Our next objective is to show that, for any assigned $\lambda\in\real$, the equation \eqref{28} has at least one corresponding solution, namely, there exists at least one weak solution to \eqref{24}. In this regard, we need  the following property.
\Bl
The operator ${\sf M}$ is compact. \label{com}
\EL{com}
\begin{proof} 
In the whole proof we denote by $c>0$ a constant that depends at most on $\lambda$ and $\Omega$, but it is independent on $n\in\nat$. 
	
	Let $\{{\sf u}_n\}$ be a bounded sequence in $X$. Taking into account that, in view of Poincar\'e inequality, $\|\nabla(\cdot)\|_2$ and $\|\cdot\|_{1,2}$ are equivalent norms in $\cald_0^{1,2}(\Omega_R)$, this implies that there is $C>0$ independent of $n\in\nat$ such that \footnote{Throughout the proof we will not make a notational difference between a sequence and any of its subsequences.} 
\be
{\|\nabla \bfu_n\|_{2}}+|\theta_n|\le C\,.
\eeq{30}
For $n,m\in\nat$, we introduce the following notation
\be
\bfu_{n,m}:=\bfu_n-\bfu_m\,,\ \bfb_n:=\bfb_\alpha(\theta_n)\,,\ \bfb_{n,m}:=\bfb_n-\bfb_m\,,\ \bfU_n:=\bfU(\bfb_n)\,,\  \bfU_{n,m}:=\bfU_n-\bfU_m\,. 
\eeq{un}
Then,
from \eqref{27}, after integrating by parts over $\Omega_R$, and recalling \eqref{14} we deduce
\be\ba{rl}\medskip
[\bfN(\bfu_n,\theta_n)-\bfN(\bfu_m,\theta_m),\bfphi] =& \lambda((\bfu_{n,m}-\bfb_{n,m}+\bfU_{n,m})\cdot\nabla\bfphi,\bfu_n)+\lambda((\bfu_{m}{-}\bfb_m+\bfU_{m})\cdot\nabla\bfphi,\bfu_{n,m})
\\ \medskip
&
-\lambda((\bfU_{n,m}{-}\bfb_{n,m})\cdot\nabla\bfphi,\bfU_n)-\lambda((\bfU_m{-}\bfb_m)\cdot\nabla\bfphi,\bfU_{n,m})+(\nabla\bfU_{n,m},\nabla\bfphi)\\
= &\Sum{i=1}{5}I_i
\,.
\ea
\eeq{31}
If we employ on the right-hand side of \eqref{31}  H\"older inequality, the assumption \eqref{30}, the embedding $L^4\subset W^{1,2}$, and \lemmref{1}-(iv), we show
\be\ba{rl}\medskip
|I_1|+|I_2|&\le c\,(\|\bfu_{n,m}\|_4^2+\|\bfu_{n,m}\|_4+\|\bfU_{n,m}\|_{1,2}+|\bfb_{n,m}|)\|\nabla\bfphi\|_2\\
|I_3|+|I_4|+|I_5|&\le c\,(\|\bfU_{n,m}\|_{1,2}+|\bfb_{n,m}|)\|\nabla\bfphi\|_2\,.
\ea
\eeq{32}
By \eqref{30}, it follows that there exists a subsequence $\{\theta_n\}$ that is Cauchy in $\real$.
Therefore, from \eqref{9_0} and \lemmref{1}, we deduce that $\{\bfb_n\}$ and $\{\bfU_n\}$ are also Cauchy in $ S^2$ and $W^{1,2}$, respectively. Likewise, again  \eqref{30} and the compactness of the embedding $L^4(\Omega_R)\subset W^{1,2}(\Omega_R)$, entail that $\{\bfu_n\}$ is Cauchy in $L^4(\Omega_R)$. Thus, from \eqref{30}--\eqref{32}, we obtain that for any $\varepsilon>0$ we may pick $n,m$ sufficiently large so that
$$
|[\bfN(\bfu_n,\theta_n)-\bfN(\bfu_m,\theta_m),\bfphi]|\le c\,\varepsilon\,\|\nabla\bfphi\|_2\,,\ \,\forall\,\bfphi\in\cald_0^{1,2}(\Omega_R)\,.
$$ 
Choosing in the latter $\bfphi=\bfN(\bfu_n,\theta_n)-\bfN(\bfu_m,\theta_m)$ proves that $\bfN$ maps bounded sets into relatively compact sets. Since, by the same token, we can show the continuity of $\bfN$, we may conclude that $\bfN$ is compact. The proof that also $N$ is compact is performed exactly along the same lines above, and we leave the details to the reader.
\end{proof}
To prove existence to \eqref{28}, we shall employ the following result  due to {Schaefer}  \cite{Sh};
see also \cite[Theorem 6.A]{Zei}.
\Bl Let $X$ be a Banach space and  
$M:X\mapsto X$ be a compact map. Suppose that the set 
$$
\{x\in X: x=t\,M(x)\ \mbox{for some}\ t\in(0,1)\}
$$
is bounded. Then $M$ has a fixed point. \label{Sh}
\EL{Sh}

We are now in a position to show the main result of this section.
\Bp For any $\lambda\in\real$, there exists at least one weak solution to \eqref{24}. This solution satisfies the estimate
\be
\|\nabla\bfu\|_{2,\Omega_R}+|\theta|\le C\,,
\eeq{33}
where $C$ depends only on $\Omega$ and $\lambda$. 
\EP{R}
\begin{proof} In view of Lemmas  \ref{com} and \ref{Sh}, we only have to show that all possible solutions to the equation ${\sf u}=t\,{\sf M}({\sf u})$, $t\in(0,1)$ are bounded with a bound independent of $t$. Now, such an equation is equivalent to the following ones
\be\ba{rl}\medskip
\left[\bfu,\bfphi\right]&=-t\,\left(\lambda\left(\left(\bfu{-}\bfb_\alpha\right)\cdot\nabla\bfu+\bfU\cdot\nabla\bfu+\bfu\cdot\nabla\bfU\right)-\bff_\lambda(\bfb_\alpha),\bfphi\right)\,,\ \ \forall\,\bfphi\in\cald^{1,2}_0(\Omega_R)\,,\\
\theta&=t\,\hat{\lambda}F_\lambda(\bfu;\bfb_\alpha)\,.
\ea
\eeq{34}
Choosing $\bfphi=\bfu$ in \eqref{34}$_1$ and integrating by parts over $\Omega_R$, we deduce
\be
\|\nabla\bfu\|_2^2=t\,\left[\lambda(\bfu\cdot\nabla\bfu,\bfU)+(\bff_\lambda(\bfb_\alpha),\bfu)\right]\,.
\eeq{35}
From \lemmref{1}-(v),
$$
|(\bfu\cdot\nabla\bfu,\bfU)|\le\half\|\nabla\bfu\|_2^2\,.
$$
Further, recalling \eqref{14},  using  Sobolev embedding inequalities in conjunction with \lemmref{1}-(iv), and Poincar\'e inequality, we get
\be
\left|(\bff_\lambda(\bfb_\alpha),\bfu)\right|\le c\|\bfU\|_{2,2}(\lambda+\|\bfU\|_{2,2})\|\bfu\|_{2,\Omega_{\rho_0}}\le c_{1\lambda}\|\nabla\bfu\|_2\,,
\eeq{36}
where $c$ and $c_{i\lambda}$, $i=1,2\ldots$, here and in the rest of the proof  denote positive constants depending respectively only on $\Omega$ and only on $\Omega,\lambda$. Employing the above estimates in \eqref{35} and since $t\in(0,1)$, we conclude
\be
\|\nabla\bfu\|_2\le 2c_{1\lambda}\,.
\eeq{37}
Likewise, integrating by parts as necessary in \eqref{16} and using H\"older inequality, we show
$$
|F_\lambda(\bfu;\bfb_\alpha)|\le \lambda(\|\bfu\|_{4,\hat{\Omega}}^2+1+\|\bfU\|_4^2+\|\nabla\bfu\|_2)\|\nabla\bfh\|_2 +|(\bff_\lambda(\bfb_\alpha),\bfh)|\,.
$$
Thus, arguing as in \eqref{36}, and using again \lemmref{1}-(iv),   Sobolev embedding, and \eqref{37} we infer
$$
|F_\lambda(\bfu;\bfb_\alpha)|\le c_{2\lambda}\|\nabla\bfh\|_2:= c_{3\lambda}\,. 
$$
From this inequality and \eqref{34}$_2$ we then conclude
\be
|\theta|\le \hat{\lambda}c_{3\lambda}\,,
\eeq{38}
and the proposition follows from \eqref{37} and \eqref{38}.
\end{proof}
\subsection{Proof of Proposition \ref{prop:1}}\label{sec:2.2}
Let $\{\Omega_n\equiv\Omega_{R_n}\}$, $R_1$ sufficiently large, be a sequence of ``invading domains," namely,
$$
\Omega_{n-1}\subset\Omega_{n}\,,\ n\in\nat\,;\ \,\ \cup_{n=1}^\infty\Omega_n=\Omega\,,
$$
and let $\{(\bfu_n,\theta_n)\}$ be the sequence of corresponding  weak solutions determined in \propref{1}. For each $n$, we extend $\bfu_n$ to 0 outside $\Omega_n$ and continue to denote by $\bfu_n$ its extension. Consequently,  $\{\bfu_n\}\subset W_0^{1,2}(\Omega)$. Using the bound \eqref{35} and the compact embedding $L^4(\Omega_R)\subset W_0^{1,2}(\Omega)$, we deduce that there exist a subsequence, still denoted by  $\{(\bfu_n,\theta_n)\}$, and $(\bfu,\theta)\in\cald_0^{1,2}(\Omega)\times\real$ such that
\be\ba{ll}\medskip
\bfu_n\rightharpoonup \bfu\,\ \mbox{weakly in $\cald_0^{1,2}(\Omega)$}\,;\ \theta_n\to\theta\,\ \mbox{in $\real$}\,;\\
\bfu_n\to\bfu\,\ \mbox{strongly in $L^4(\Omega_R)$ for all $R>R_*$.}
\ea
\eeq{41}
From \eqref{25} it follows that, for any fixed $\bfphi\in\cald(\Omega)$ and $n$ large enough, the sequence 
$\{(\bfu_n,\theta_n)\}$ satisfies
 \be\ba{rl}\medskip
\left[\bfu_n,\bfphi\right]&=-\left(\lambda\left((\bfu_n-V\bfb_n)\cdot\nabla\bfu_n+\bfU_n\cdot\nabla\bfu_n+\bfu_n\cdot\nabla\bfU_n\right)-\bff_\lambda(\bfb_n),\bfphi\right)\,,\\
\theta_n&=\hat{\lambda}F_\lambda(\bfu_n;\bfb_n)\,,
\ea
\eeq{43}  
where we adopted the notation in \eqref{un}. We now let $n\to\infty$ in \eqref{43}. Taking into account that $\supp(\bfphi)$ is compact, we can repeat word by word the same argument used in the proof of \lemmref{com}, by formally replacing there $(\bfu_m,\theta_m)$ with $(\bfu,\theta)$, and show that the limit quantity $(\bfu,\theta)$ satisfies \eqref{18}. The proof of the proposition is therefore completed. 
\qed

\section{Uniqueness of Equilibrium}\label{sec:uniqueness}
Once the existence of equilibria for system \eqref{01_nrs} (i.e., the solutions to \eqref{7}--\eqref{8}) is established,  one naturally wonders whether the equilibrium position is unique. This also because the method we use, for the construction of such solutions, involves an auxiliary function $\bfh$ whose choice can be made in infinitely many different ways; see \eqref{h1}. In this section, we address this question.
\par The main result is Theorem \ref{th:unicita}, which states that a smallness assumption on the parameter $\lambda$ is sufficient  to ensure uniqueness of solutions to \eqref{7}--\eqref{8}. The proof of Theorem \ref{th:unicita} relies on a key observation: the structure of the fluid equations in the system satisfied by the difference between two solutions is of Oseen-type, with an Oseen velocity $\bfb_{\alpha}$. Thus, to handle this structure, we adapt techniques developed for the Oseen equations in exterior domains, as presented in \cite[Section VIII]{Gab}, to the fluid-body setting considered here.   A preliminary step in applying these techniques is to obtain $\lambda$-dependent bounds for certain $L^q$-norms of the solutions to \eqref{7}--\eqref{8}. This is achieved in Lemma \ref{Lq-estimate}, which in turn allows us to establish Theorem \ref{th:unicita}, which we will prove in Section \ref{sub2}. 

\par At this point, we state:
\begin{theorem}\label{th:unicita}
	There exists $\lambda_0>0$ depending on $\Omega$, such that, if 
	$
	\lambda<\lambda_0\,,
	$
	the solution to \eqref{7}--\eqref{8} given by Theorem \ref{th:1} is unique.
\end{theorem}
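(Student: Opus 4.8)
The plan is to show that the difference of two equilibria satisfies an Oseen problem in $\Omega$ whose forcing carries a factor $\lambda$ multiplied by the (uniformly bounded) norms of the individual solutions, and then to close a pair of coupled estimates for the fluid difference and the angle difference by the smallness of $\lambda$. First I would reduce to the fluid–angle pair. Let $(\bfv_i,p_i,\bfdelta_i,\theta_i)$, $i=1,2$, be two solutions furnished by Theorem~\ref{th:1} for the same $\lambda<\lambda_0$. Setting $\bfb_i:=\bfb_\alpha(\theta_i)$ and $\bsfu_i:=\bfv_i+\bfb_i$, each pair $(\bsfu_i,\theta_i)$ solves \eqref{10}--\eqref{11}; since $\bfdelta_i$ is recovered from $(\bfv_i,p_i,\theta_i)$ via \eqref{delta}, it suffices to prove $\bsfu_1=\bsfu_2$ and $\theta_1=\theta_2$. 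Write $\bfw:=\bsfu_1-\bsfu_2$, $\pi:=p_1-p_2$, $\vartheta:=\theta_1-\theta_2$. Because $\mathbb Q(\theta)$, hence $\bfb_\alpha(\theta)$ by \eqref{9_0}, is smooth in $\theta$, one has $|\bfb_1-\bfb_2|\le c\,|\vartheta|$ with $c=c(\Omega)$.

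Next I would extract the Oseen structure. Subtracting the two copies of \eqref{10} and regrouping the convective terms around the constant far-field drift of the second solution yields
\[
\Delta\bfw-\nabla\pi+\lambda\,\bfb_2\cdot\nabla\bfw=\lambda\big[(\bfw-(\bfb_1-\bfb_2))\cdot\nabla\bsfu_1+\bsfu_2\cdot\nabla\bfw\big],\qquad \Div\bfw=0 \ \text{in }\Omega,
\]
with $\bfw=\bfb_1-\bfb_2$ on $\partial\Omega$ and $\bfw\to\0$ at infinity. The left-hand side is the Oseen operator with unit drift $\bfb_2$; the right-hand side is perturbative, carrying a factor $\lambda$ and involving the gradients and values of the individual solutions. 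To deal with the nonzero boundary value I would subtract the compactly supported lift $\bfU(\bfb_1-\bfb_2)=\bfU(\bfb_1)-\bfU(\bfb_2)$ supplied by \lemmref{1}, whose $W^{2,2}$-norm is $\le c\,|\bfb_1-\bfb_2|\le c\,|\vartheta|$ by property (vii); the field $\bfz:=\bfw-\bfU(\bfb_1-\bfb_2)\in\cald^{1,2}_0(\Omega)$ then solves the same Oseen problem with homogeneous boundary data, up to lower-order terms of size $\le c\,|\vartheta|$.

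I would then apply the $L^q$-theory for the Oseen problem in exterior domains (see \cite[Section~VIII]{Gab}). The crucial input is Lemma~\ref{Lq-estimate}, which bounds the norms of $\bsfu_1,\bsfu_2$ and their gradients affinely in $\lambda$, so the relevant constants stay bounded for $\lambda<\lambda_0$. This gives, in a natural Oseen norm $\|\cdot\|_X$ (an appropriate combination of $L^q$ and $D^{1,r}$ norms matched to the summability \eqref{REG}), an estimate of the form $\|\bfw\|_X\le a\,\lambda\,\|\bfw\|_X+b\,|\vartheta|$ with $a,b$ depending only on $\Omega$. For the angle, subtracting \eqref{11} and representing the surface integral of the stress difference through the volume functional \eqref{16} (testing the difference equation against the cutoff field $\bfh$) produces $|\vartheta|\le d\,\lambda\,\|\bfw\|_X$, again with $d=d(\Omega)$ bounded for $\lambda<\lambda_0$. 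Inserting the second inequality into the first gives $\|\bfw\|_X\le(a+bd)\,\lambda\,\|\bfw\|_X$; choosing $\lambda_0$ so small that $(a+bd)\lambda_0<1$ forces $\bfw=\0$, whence $\vartheta=0$ and, by \eqref{delta}, $\bfdelta_1=\bfdelta_2$, which is the claimed uniqueness.

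The main obstacle is the fluid estimate in this last step. Classical energy arguments fail because $\nabla\bsfu_1$ decays only anisotropically, along the Oseen wake, so an $L^2$-based bound for $\lambda\,(\bfw\cdot\nabla\bsfu_1,\bfw)$ does not close; this is exactly why the $L^q$ Oseen theory and the summability bounds of Lemma~\ref{Lq-estimate} are indispensable. The delicate points are, first, selecting the norm $\|\cdot\|_X$ so that the three products on the right-hand side land in the correct Lebesgue spaces and the constants do not degenerate as $\lambda\to0$; and, second, handling the fact that the two solutions carry different drifts $\bfb_1\neq\bfb_2$. I would fix $\bfb_2$ as the reference drift and absorb $\bfb_1-\bfb_2=O(|\vartheta|)$ into the perturbation, which is legitimate precisely because $|\vartheta|$ is itself controlled by $\lambda\,\|\bfw\|_X$ through the torque balance, so no estimate is lost in the coupling.
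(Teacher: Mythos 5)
Your proposal is correct and follows essentially the same strategy as the paper's proof: subtract the two solutions, exploit the Oseen structure of the difference system with drift given by one solution's far-field direction, apply the $L^q$ Oseen theory of \cite[Section VII]{Gab} together with the $\lambda$-affine bounds of Lemma \ref{Lq-estimate}, and close a coupled fluid/angle estimate by smallness of $\lambda$. The only deviations are implementation details: you remove the boundary datum $\bfb_1-\bfb_2$ with the compactly supported lift $\bfU$ and bound the angle difference through the volume functional $F_\lambda$, whereas the paper feeds the constant boundary value $\bfv^*$ directly into the inhomogeneous Oseen estimate \eqref{Oseen} and controls the angle via the surface torque integral combined with the trace estimates \eqref{traces}; both variants produce the same coupling $|\vartheta|\lesssim\lambda\,\|\bfw\|_X$ and the same final absorption argument.
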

%

Before proceeding to the proof of Theorem \ref{th:unicita}, which will be unravelled in Sections \ref{sub1}-\ref{sub2}, we need some technical construction. In  what follows, we will indeed use  a lifting for the boundary data in \eqref{10} which differs from the one that we constructed at the beginning of Section \ref{sec:equilibria}.  Let $\varphi$ be the cut-off function used to define $\bfh$ in \eqref{h1}. For any $\theta\in \mathbb{R}$, take $\bsfU(\bfx;\bfb_{\alpha}(\theta))$ with $\bsfU$ as in \eqref{U}. For some fixed $\rho_0>R_*$, we define 
\be
\bfV(\bfx,\bfb_{\alpha}(\theta)):=\text{curl}\left(\varphi({|\bfx|}/{\rho_0})\bsfU\right)\,.
\eeq{Lifting}
Clearly, $\bfV$ satisfies 
\be
\text{div}\bfV=0\,\,\text{in}\,\,\Omega\,,\quad \bfV\equiv0\,\,\text{for} \,\,|\bfx|>2\rho_0\,, \quad \bfV=\bfb_\alpha(\theta)\,\ \mbox{at $\partial\Omega$}\,,\quad  \|\bfV\|_{C^k(\bar{\Omega})}\le C \,,\ \, \forall k\in\mathbb{N}_0\,,
\eeq{V_stime}
for some $C>0$ depending on $R_*$.
\subsection{Bounds for the Solutions in terms of $\lambda$}\label{sub1}
In order to prove uniqueness, as already mentioned, we begin to show that some suitable norms of the solutions to \eqref{7}-\eqref{8} given by Theorem \ref{th:1} can be bounded in terms of $\lambda$. 
\begin{lemma}\label{Lq-estimate}
	Let $1<s<2$, set 
\be
a_1=\min\{1,\lambda^{1/2}\}\,,\ \, a_2=\min\{1,\lambda^{1/4}\}\,,
\eeq{aa}  and let $(\bfv,p,\bfdelta,\theta)$ be any solution  to \eqref{7}-\eqref{8} given by Theorem \ref{th:1}. 
	There exists $\lambda_0=\lambda_0(\Omega,s)$ such that, if
\be
\lambda<\lambda_0\,,
	\eeq{smallness1}
then $\bfv$ satisfies 
	\be
	\begin{aligned}
\|\nabla \bfv \|_2\le C(1+\lambda)\,,\qquad {a_1}\|\bfv+\bfb_{\alpha}(\theta)\|_4\le C\,(1+\lambda)\,,  \qquad  {a_2}\|\nabla\bfv \|_{\frac{4s}{4-s}}\le C\,(1+\lambda) \,,
\end{aligned}
\eeq{bound_norms}
where $C=C(\Omega)$.
\end{lemma}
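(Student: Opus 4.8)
The plan is to derive the three bounds in \eqref{bound_norms} for the single solution $\bfv$ provided by Theorem \ref{th:1}, exploiting that by \eqref{REG} we already know $\bfv+\bfb_\alpha\in L^q\cap D^{1,r}$ for the relevant exponents, so all the quantities on the left are finite \emph{a priori}; the content of the lemma is that they are controlled by an \emph{affine} function of $\lambda$ once $\lambda$ is small. First I would obtain the energy bound $\|\nabla\bfv\|_2\le C(1+\lambda)$. Writing $\bfu:=\bfv+\bfb_\alpha$ (which solves the homogeneous-at-infinity problem \eqref{10}) and subtracting the lift, I would test the weak formulation with $\bfv$ itself, integrate by parts, and use that the leading viscous term gives $\|\nabla\bfv\|_2^2$ while the Oseen transport term $\lambda(\bfb_\alpha\cdot\nabla\bfv,\bfv)$ vanishes (since $\bfb_\alpha$ is constant and $\Div\bfv=0$) up to boundary contributions controlled by the lift $\bfV$ from \eqref{V_stime}. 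The nonlinear term $\lambda(\bfv\cdot\nabla\bfv,\bfV)$ is absorbed exactly as in \lemmref{1}-(vi)/(v), and the forcing built from $\bfV$ contributes a term linear in $\lambda$ by \eqref{V_stime}; this yields $\|\nabla\bfv\|_2^2\le C(1+\lambda)\|\nabla\bfv\|_2$, hence the first estimate.

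The crux is the second and third estimates, which are genuinely Oseen-type and must capture the correct decay. Here I would treat the system satisfied by $\bfu=\bfv+\bfb_\alpha$ as an Oseen problem
$$
\Delta\bfu-\lambda\,\bfb_\alpha\cdot\nabla\bfu-\nabla p=\lambda\,\bfu\cdot\nabla\bfu+(\text{lift terms})\,,\qquad \Div\bfu=0\,,
$$
with constant Oseen velocity $\bfb_\alpha$, and invoke the sharp homogeneous-Sobolev estimates for the exterior Oseen problem from \cite[Section VIII]{Gab}. These give, for the relevant exponents, control of $\|\bfu\|_4$ and $\|\nabla\bfu\|_{4s/(4-s)}$ in terms of the $L^s$-type norm of the right-hand side, with the characteristic Oseen constants producing the weights $a_1=\min\{1,\lambda^{1/2}\}$ and $a_2=\min\{1,\lambda^{1/4}\}$ (the $\lambda$-powers reflect the anisotropic scaling of the Oseen fundamental solution). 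I would estimate the right-hand side in $L^s$: the lift terms are bounded by a constant times $\lambda$ via \eqref{V_stime}, and the nonlinearity $\lambda\,\bfu\cdot\nabla\bfu$ is handled by H\"older, $\|\bfu\cdot\nabla\bfu\|_s\le\|\bfu\|_4\|\nabla\bfu\|_{4s/(4-s)}$, \emph{feeding back} the very quantities we are estimating.

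The main obstacle is precisely closing this nonlinear feedback, and this is where the smallness condition \eqref{smallness1} enters. After inserting the Oseen estimates, I expect to reach a coupled pair of inequalities of the schematic form
$$
a_1\|\bfu\|_4+a_2\|\nabla\bfu\|_{4s/(4-s)}\le C(1+\lambda)+C\lambda\,\|\bfu\|_4\,\|\nabla\bfu\|_{4s/(4-s)}\,,
$$
so I would set $Y:=a_1\|\bfu\|_4+a_2\|\nabla\bfu\|_{4s/(4-s)}$ and use the weights to show the quadratic term is dominated by $(C\lambda/(a_1 a_2))Y^2$. Since $\lambda/(a_1a_2)=\lambda/\min\{1,\lambda^{3/4}\}$ stays small for small $\lambda$, a standard absorption/continuity (successive-approximation or barrier) argument—using that $Y$ is already known to be finite by \eqref{REG} and that $Y\to0$ as the data switch off—forces $Y\le C(1+\lambda)$ for $\lambda<\lambda_0(\Omega,s)$. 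Dividing back by $a_1$ and $a_2$ gives the stated bounds. The delicate points to verify carefully are the exact exponent bookkeeping so that $4s/(4-s)$ and the conjugate $s$ are admissible in the Oseen theory of \cite{Gab}, and the precise powers of $\lambda$ in $a_1,a_2$; the rest is routine interpolation and H\"older estimates.
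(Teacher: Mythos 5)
Your first estimate (the energy bound $\|\nabla\bfv\|_2\le C(1+\lambda)$) is essentially the paper's argument and is fine: subtract the compactly supported lift $\bfV$, test with the difference, and absorb the $\lambda$-terms using \eqref{V_stime} and Poincar\'e on the support of $\bfV$, at the price of a first smallness condition on $\lambda$.

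The gap is in how you close the second and third estimates. You apply the Oseen theory of \cite[Section VII]{Gab} \emph{directly to the given solution} and arrive at a quadratic self-improving inequality of the form $Y\le A+ B\,Y^2$ with $Y:=a_1\|\bfu\|_4+a_2\|\nabla\bfu\|_{4s/(4-s)}$, $A\sim C(1+\lambda)$, $B\sim C\lambda/(a_1a_2)$. But such an inequality, even with $4AB<1$ and $Y$ finite a priori, only tells you that $Y$ lies below the small root \emph{or above the large root} of the associated quadratic; it does not by itself yield $Y\le C(1+\lambda)$. To exclude the large branch you invoke a ``continuity/barrier argument, using that $Y\to 0$ as the data switch off,'' but no such deformation exists here: the lemma must hold for \emph{any} solution in the class \eqref{REG} (this is exactly how it is used in the uniqueness theorem), and a fixed solution of the nonlinear exterior problem does not come embedded in a continuous family along which the data can be switched off --- solutions for perturbed data need not be unique nor depend continuously on the perturbation, so there is no connectedness argument available. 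This is precisely the difficulty the paper is engineered to avoid.

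The paper's route has two steps that your proposal is missing. First, it runs the successive approximation scheme \eqref{12_k} on \emph{linear} Oseen problems ($\bfw_k$ solves an Oseen system with right-hand side $\lambda\,\bfw_{k-1}\cdot\nabla\bfw_{k-1}$), so the uniform bound \eqref{hp_ind} is proved by a genuine induction --- no quadratic trap arises because at each step the estimated quantity and the quantity appearing on the right-hand side belong to different iterates. This produces an auxiliary solution $(\bfv',p')$ with the desired affine-in-$\lambda$ bounds and, crucially, with $\|\bfv'+\bfb_\alpha\|_3\le (2\gamma_1\lambda)^{-1}$. Second --- and this is the step your sketch has no counterpart for --- the paper proves that the \emph{arbitrary} given solution $\bfv$ coincides with $\bfv'$: the difference $\bfu=\bfv-\bfv'$ satisfies \eqref{uniq}, the energy identity \eqref{id1} holds thanks to the summability in \eqref{REG}, and the trilinear term is absorbed using the $L^3$-smallness of $\bfv'+\bfb_\alpha$, forcing $\bfu=\0$. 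Only after this identification do the bounds transfer to $\bfv$. Without this weak-strong-type uniqueness step, your argument establishes the bounds only for \emph{some} solution, not for the one you started with, and the lemma as stated (and as needed later) is not proved.
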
  
\begin{proof}
{Let $\rho_0>R_*$ be given, $C_0>0$ be the Poincaré constant associated to $\Omega_{\rho_0}$ and
assume $\lambda C_0^2<1-\tau$ for some $\tau>0$. }  Let $(\bfv,p,\bfdelta,\theta)$ be a solution to \eqref{7}-\eqref{8} given by Theorem \ref{th:1}. We start proving the first inequality in \eqref{bound_norms}. Define $\bfu:=\bfv+\bfb_{\alpha}(\theta)-\bfV$, with  $\bfV=\bfV(\bfb_{\alpha}(\theta))$ as in \eqref{Lifting}, then $\bfu$ satisfies 
\be\left\{\ba{ll}\medskip\left.\ba{lr}\medskip
\lambda\left[(\bfu+\bfV-\bfb_\alpha(\theta))\cdot\nabla\bfu+\bfu\cdot\nabla\bfV\right] -\Delta \bfV+\lambda\left[  \bfV\cdot\nabla\bfV-\bfb_{\alpha}(\theta)\cdot\nabla\bfV\right] = &\Delta\bfu-\nabla p \\
&\Div\bfu=0\ea\right\}\ \ \mbox{in $\Omega$}\\
\bfu=\0 \ \mbox{at $\partial\Omega$}\,;\ \ \Lim{|\bfx|\to\infty}\bfu=\0\,.
\ea\right.
\eeq{pb_u}
Dot-multiplying \eqref{pb_u}$_1$ by $\bfu$ and integrating by parts over $\Omega$ yields 
\be
\|\nabla\bfu\|^2_2=\int_{\Omega}(-\Delta\bfV+\lambda \bfV\cdot\nabla\bfV-\lambda\bfb_{\alpha}(\theta)\cdot\nabla\bfV+\lambda\bfu\cdot\nabla\bfV)\cdot\bfu\,.
\eeq{above}
Applying H\"older inequality in \eqref{above} implies
$$
 \|\nabla\bfu\|^2_2\le C_0\left(\sup_{\bfx\in\Omega_{\rho_0}}(|\Delta\bfV(\bfx)|+\lambda|\bfV(\bfx)||\nabla\bfV(\bfx)|+\lambda|\bfb_{\alpha}||\nabla\bfV(\bfx)|)\right)\|\nabla\bfu\|_2+\lambda\, C^2_0\|\nabla \bfu\|^2_2\,,
$$
so that 
{
$$
\tau\|\nabla\bfu\|_2\le C_0\left(\sup_{\bfx\in\Omega_{\rho_0}}(|\Delta\bfV(\bfx)|+\lambda|\bfV(\bfx)||\nabla\bfV(\bfx)|+\lambda|\bfb_{\alpha}||\nabla\bfV(\bfx)|)\right).
$$
Combined with \eqref{V_stime}, this yields the estimate
\be
\|\nabla \bfv\|_2\le \|\nabla\bfu\|_2+\|\nabla\bfV\|_2\le c_1(1+\lambda)
\eeq{u_V}
for some $c_1>0$ depending only on $\Omega$.}
\par  The proof of the other bounds appearing in \eqref{bound_norms} follows from the ideas of \cite[Lemma X.7.1]{Gab}. We begin to prove that there exists a solution, say $(\bfv',p')$, to \eqref{7}, in a suitable regularity class, satisfying \eqref{bound_norms}. If we define $\bfw:=\bfv'+\bfb_{\alpha}(\theta)$, then $(\bfw,p)$ must satisfy 
	\be\left\{\ba{cc}\medskip\left.\ba{ll}\medskip
	{\lambda}\,(\bfw-\bfb_{\alpha}(\theta))\cdot\nabla\bfw=\Delta\bfw-\nabla p\\
	\qquad\qquad\qquad\qquad\qquad\Div\bfw=0\ea\right\}\ \ \mbox{in $\Omega$}\\
	\bfw(\bfx)=\bfb_\alpha(\theta) \ \mbox{at $\partial\Omega$}\,;\ \ \Lim{|\bfx|\to\infty}\bfw(\bfx)=\bf0\,.
	\ea\right.
	\eeq{12_U}

 Notice that $\theta$ is fixed, as it is the angle associated to the solution $(\bfv,p,\bfdelta,\theta)$, hence from now on we will just write $\bfb_{\alpha}$, dropping the dependence on $\theta$.
We consider a sequence of approximating solutions $\{\bfw_k,p_k\}$ to \eqref{12_U}, defined by $\bfw_0\equiv p_0\equiv 0$ and
		\be\left\{\ba{lc}\medskip\left.\ba{lr}\medskip
\Delta\bfw_k+\lambda \bfb_\alpha\cdot\nabla\bfw_{k}=&\nabla p_k+\lambda\left(\bfw_{k-1}\cdot\nabla\bfw_{k-1}\right)\\
&	\Div\bfw_k=0\ea\right\}\ \ \mbox{in $\Omega$}\\
	\bfw_k(\bfx)=\bfb_{\alpha} \ \mbox{at $\partial\Omega$}\,;\ \ \Lim{|\bfx|\to\infty}\bfw_k(\bfx)=\0\,,
	\ea\right.
	\eeq{12_k}
for $k\ge 1$. {For $k=1$, \eqref{12_k} corresponds to an Oseen equation, so that \cite[Theorem VII.7.1]{Gab} combined with \cite[(II.6.22)]{Gab} ensure existence and uniqueness of $\{\bfw_1,p_1\}$ that satisfies}
	$$
\bfw_1\in L^{\frac{2s}{2-s}}(\Omega)\cap D^{1,\frac{4s}{4-s}}(\Omega)\cap D^{2,s}(\Omega)\,, \qquad p_1\in D^{1,s}(\Omega)\,,	\qquad \text{with }1<s<2
	$$
satisfying 
	\be
	\begin{aligned}
a_1\|\bfw_1\|_{\frac{2s}{2-s}}+\frac{1}\gamma_1\|\nabla\bfw_1\|_{\frac{3s}{3-s}}+a_2\|\nabla\bfw_1\|_{\frac{4s}{4-s}}+\|D^2\bfw_1\|_{s}+\frac{1}\gamma_1\|p_1\|_{\frac{3s}{3-s}}+\|\nabla p_1\|_{s}&\le c_2\,,
\end{aligned}
\eeq{f}
{where $\gamma_1>0$ (and is independent of $\bfw_1$ and $p_1$), $c_2>0$ depends on $s$ and $\Omega$ only, 
$$
	a_1=\text{min}\{1,\lambda^{1/2}\}\,,\qquad a_2=\text{min}\{1,\lambda^{1/4}\}\,.
	$$}
{We have implicitly used the fact that \(\bfb_\alpha(\theta)\in S^2\) so that \(\|\bfb_\alpha(\theta)\|_{2-1/s,s,\partial \Omega}\) is a real number that depends only on $s$ and $\Omega$.}
 We next observe that if we  write $\bfw_k=\bfu_k+\bfV$, by the same type of arguments that lead to \eqref{u_V}, we have
\be
\|\nabla\bfw_k\|_2\le c_1(1+\lambda) \qquad \forall\, k\ge 1\,,
\eeq{wk_V}
thus we can add such bound to \eqref{f} and we obtain 
	\be
\begin{aligned}
&	a_1\|\bfw_1\|_{\frac{2s}{2-s}}+\|\nabla \bfw_1\|_2+\frac{1}\gamma_1\|\nabla\bfw_1\|_{\frac{3s}{3-s}}+a_2\|\nabla\bfw_1\|_{\frac{4s}{4-s}}+\|D^2\bfw_1\|_{q}+\frac{1}\gamma_1\|p_1\|_{\frac{3s}{3-s}}+\|\nabla p_1\|_{s}\\&\le c_2+c_1(1+\lambda)\le
{ c_3
(1+\lambda)
\,.}
\end{aligned}
\eeq{allD}
We will next show by induction that  the estimate
	\be
\begin{aligned}
	&a_1\|\bfw_k\|_{\frac{2s}{2-s}}+\|\nabla \bfw_k\|_2+\frac{1}\gamma_1\|\nabla\bfw_k\|_{\frac{3s}{3-s}}+a_2\|\nabla\bfw_k\|_{\frac{4s}{4-s}}+\|D^2\bfw_k\|_{s}+\frac{1}\gamma_1\|p_k\|_{\frac{3s}{3-s}}+\|\nabla p_k\|_{s}\\&\le 2c_3{(1+\lambda)}
\end{aligned}
\eeq{hp_ind}
holds for all $k\in\mathbb{N}$, provided that $\lambda$ is small enough. Suppose therefore that \eqref{hp_ind} is true
for some $k\in\mathbb{N}$. Then, H\"older inequality implies that 
$$
\|\bfw_k\cdot\nabla\bfw_{k}\|_{s}\le \|\bfw_{k}\|_{\frac{2s}{2-s}}\|\nabla \bfw_k\|_2\,.
$$
and from the induction hypothesis \eqref{hp_ind}, we deduce
$$
\|\bfw_k\cdot\nabla\bfw_{k}\|_s\le \frac{4c_3^2(1+\lambda)^2}{a_1}\,.
$$
Hence, we can apply again \cite[Theorem VII.7.1]{Gab}-\cite[(II.6.22)]{Gab} to \eqref{12_k}  and \db{use } \eqref{wk_V} to  obtain that $\{\bfw_{k+1},p_{k+1}\}$  satisfies 
{	\be
\begin{aligned}
	&a_1\|\bfw_{k+1}\|_{\frac{2s}{2-s}}+\|\nabla\bfw_{k+1}\|_2+\frac{1}\gamma_1\|\nabla\bfw_{k+1}\|_{\frac{3s}{3-s}}+a_2\|\nabla\bfw_{k+1}\|_{\frac{4\db{?q?}}{4-s}}+\|D^2\bfw_{k+1}\|_{s}+\frac{1}\gamma_1\|p_{k+1}\|_{\frac{3s}{3-s}}+\|\nabla p_{k+1}\|_{s}\\&\le c_2\left(\frac{4\lambda c_3^2(1+\lambda)^2}{a_1}+1\right)+c_1(1+\lambda)\le \left(\frac{\lambda}{a_1}4c_2c_3(1+\lambda)+1\right)c_3(1+\lambda)\,.
\end{aligned}
\eeq{tesi_ind}}
It follows that \eqref{hp_ind} is satisfied for all $k\in\mathbb{N}$ as soon as {
\be
\frac{\lambda}{\min\{1,\lambda^{1/2}\}}(1+\lambda)<\frac{1}{4c_2c_3}\,.
\eeq{th}
}
We will next prove that $\{\bfw_k,p_k\}$ is a Cauchy sequence in 
$$
\mathcal{S}\equiv \left(L^\frac{2s}{2-s}(\Omega)\cap \dot{D}^{1,\frac{4s}{4-s}}(\Omega)\cap \dot{D}^{2,s}(\Omega)
\right)\times \dot{D}^{1,s}(\Omega)\,,\qquad 1<s<2\,,
$$
where by $\dot{D}^{m,q}(\Omega)$, $1\le q\le \infty, m\in\mathbb{N}$, we indicate the space of all (equivalent classes) $\bfw\in D^{m,q}$ such that $\bfw=\bfu+\mathcal{P}$, for some $\bfu\in D^{m,q}$ and some $\mathcal{P}\in {\sf P}_m$, {where ${\sf P}_m$ is the class of all polynomials of degree less or equal than $m-1$}. 
This is indeed a Banach space  if equipped with the norm $|\cdot|_{m,s}$ (see \cite[Lemma II.6.2]{Gab}). 

Take $\{\bfw_{k},p_{k}\}$ and $\{\bfw_{k+1},p_{k+1}\}$. Then, the problem satisfied by the difference between the two solutions reads as
	\be\left\{\ba{l}
	\medskip\left.\ba{rl}\medskip
 \Delta (\bfw_{k+1}-\bfw_k)-\lambda\bfb_{\alpha}\cdot \nabla (\bfw_{k+1}-\bfw_k)=&\nabla(p_{k+1}-p_k) +\lambda(\bfw_{k}-\bfw_{k-1})\cdot\nabla\bfw_{k})\\
 & +\lambda\bfw_{k-1}\cdot\nabla(\bfw_{k}-\bfw_{k-1})\\[6pt]
&\hspace{2.8cm} \Div (\bfw_{k+1}-\bfw_{k})=0
\ea\right\}\ \ \mbox{in $\Omega$}\\
(\bfw_{k+1}-\bfw_{k})(\bfx)=\0\ \mbox{at $\partial\Omega$}\,;\ \ \Lim{|\bfx|\to\infty}(\bfw_{k+1}-\bfw_{k})(\bfx)=\0\,.
\ea\right.
\eeq{diff_k}
Set $\bff = (\bfw_{k}-\bfw_{k-1})\cdot\nabla\bfw_{k}+\bfw_{k-1}\cdot\nabla(\bfw_{k}-\bfw_{k-1})$.
The application of \cite[Theorem VII.7.1]{Gab}-\cite[(II.6.22)]{Gab} to \eqref{diff_k} gives then 
$$
\begin{aligned}
&a_1\|\bfw_{k+1}-\bfw_{k}\|_{\frac{2s}{2-s}}+\frac{1}\gamma_1\|\nabla(\bfw_{k+1}-\bfw_{k})\|_{\frac{3s}{3-s}}+a_2\|\nabla(\bfw_{k+1}-\bfw_{k})\|_{\frac{4s}{4-s}}+\|D^2(\bfw_{k+1}-\bfw_{k})\|_s\\&+\frac{1}\gamma_1\|p_{k+1}-p_k\|_{\frac{3s}{3-s}}+\|\nabla(p_{k+1}-p_k)\|_s\le  c_2\lambda\|\bff\|_s\,.
\end{aligned}
$$
By Minkowski and H\"older inequality, we infer
\be
\begin{aligned}
&\|\bfw_{k-1}\cdot\nabla (\bfw_k-\bfw_{k-1})+(\bfw_k-\bfw_{k-1})\cdot \nabla \bfw_k\|_s\\&\le \|\bfw_{k-1}\|_4\|\nabla (\bfw_k-\bfw_{k-1})\|_{\frac{4s}{4-s}}+\|\nabla\bfw_k\|_2\|\bfw_k-\bfw_{k-1}\|_{\frac{2s}{2-s}}\,.
\end{aligned}
\eeq{RHS1-}
 Enforcing \eqref{hp_ind} with $s=4/3$ in \eqref{RHS1-} gives 
$$
\|\bfw_{k-1}\cdot\nabla (\bfw_k-\bfw_{k-1})+(\bfw_k-\bfw_{k-1})\cdot \nabla \bfw_k\|_s\le \frac{2{c_3}(1+\lambda)}{a_1}\left(\|\nabla (\bfw_k-\bfw_{k-1})\|_{\frac{4s}{4-s}}+a_1\|\bfw_k-\bfw_{k-1}\|_{\frac{2s}{2-s}}\right)\,.
$$ 
This inequality implies that, for all $k\ge 1$, we have 
$$
\begin{aligned}
	&a_1\|\bfw_{k+1}-\bfw_{k}\|_{\frac{2s}{2-s}}+\frac{1}{\gamma_1}\|\nabla(\bfw_{k+1}-\bfw_{k})\|_{\frac{3s}{3-s}}+a_2\|\nabla(\bfw_{k+1}-\bfw_{k})\|_{\frac{4s}{4-s}}+\|D^2(\bfw_{k+1}-\bfw_{k})\|_s\\&+\frac{1}\gamma_1\|p_{k+1}-p_k\|_{\frac{3s}{3-s}}+\|\nabla(p_{k+1}-p_k)\|_s\le {\left(\frac{2 c_2c_3\lambda(1+\lambda)}{a_1}\right)^{k+1}\,,} 
\end{aligned}
$$
which, in view of \eqref{th}, yields that $\{\bfw_k,p_k\}$ is a Cauchy sequence in $\mathcal{S}$. The estimate \eqref{hp_ind} then also holds for the limiting fields $(\bfw,p')\in \mathcal{S}$, namely
\be
a_1\|\bfw\|_{\frac{2s}{2-s}}+a_2\|\nabla \bfw\|_{\frac{4s}{4-s}}\le {2c_3(1+\lambda)\,.}
\eeq{st1}
Fix $\bfv'=\bfw-\bfb_{\alpha}$ and, if needed, restrict further the smallness assumption on $\lambda$ by imposing that
	\be
\frac{\lambda\max\{1,\lambda\}}{\min\{1,\lambda^{1/2}\}}\le \frac{1}{4c_3\gamma_1}\,.
\eeq{smallness3}
Then, using \eqref{smallness3} 
and \eqref{st1} with $s=6/5$, we infer that 
\be
\|\bfv'+\bfb_{\alpha}\|_{3}\le (2{\gamma_1\lambda})^{-1}\,.
\eeq{nice}
We finally prove that $\bfv$ coincides with $\bfv'$, which will give us the other bounds in \eqref{bound_norms}. Define $\bfu:=\bfv-\bfv',q:=p-p'$. Then $(\bfu,q)$ satisfies 
	\be\left\{\ba{lc}\medskip\left.\ba{lr}\medskip
{\lambda}\,\left(\bfu\cdot\nabla\bfu+\bfu\cdot\nabla(\bfv'+\bfb_{\alpha})+\bfv'\cdot\nabla\bfu\right)& =\Delta\bfu-\nabla q\\
& \Div\bfu=0\ea\right\}\ \ \mbox{in $\Omega$}\\
\bfu(\bfx)=\0 \ \mbox{at $\partial\Omega$}\,;\ \ \Lim{|\bfx|\to\infty}\bfu(\bfx)=\0\,.
\ea\right.
\eeq{uniq}
Multiplying \eqref{uniq} and integrating by parts over $\Omega$ yields 
\be
\|\nabla\bfu\|^2_2=-\lambda\int_{\Omega}\bfu\cdot\nabla\bfu \cdot(\bfv'+\bfb_{\alpha})\,.
\eeq{id1}
Then, thanks to the summability properties of both $\bfv$ and $\bfv'$, we infer, using H\"older inequality and \cite[II.6.22]{Gab}, that 
\be
\int_{\Omega}\bfu\cdot\nabla\bfu \cdot(\bfv'+\bfb_{\alpha})\le \|\bfu\|_6\|\nabla\bfu\|_2\|\bfv'+\bfb_{\alpha}\|_3\le \gamma_1\|\nabla\bfu\|_2^2 \|\bfv'+\bfb_{\alpha}\|_3\,.
\eeq{tri1}
Substituting \eqref{tri1} in \eqref{id1}, in view of \eqref{nice}, we infer that $\bfu=\bf0$ a.e. in $\Omega$. Therefore, $\bfv$ satisfies
$$
a_1\|\bfv+\bfb_{\alpha}\|_{\frac{2s}{2-s}}+a_2\|\nabla \bfv\|_{\frac{4s}{4-s}}\le 2c_3(1+\lambda)\,.
$$
This gives \eqref{bound_norms} and concludes the proof of the lemma. 
\end{proof}
\subsection{Proof of Theorem \ref{th:unicita}}\label{sub2}
Assume \eqref{smallness1}. 
	Consider two solutions $(\bfv_0,p_0,\bfdelta_0,\theta_0)$ and $(\bfv+\bfv_0,p+p_0,\bfdelta+\bfdelta_0,\theta+\theta_0)$ to \eqref{7}-\eqref{8} in the class \eqref{REG} given by Theorem \ref{th:1} corresponding to the same $\lambda$. Then, the quadruple $(\bfv,p,\bfdelta,\theta)$ satisfies 
	\be\left\{\ba{lc}\medskip\left.\ba{ll}\medskip
	{\lambda}\,(\bfv+\bfv_0)\cdot\nabla\bfv+\lambda \bfv\cdot \nabla\bfv_0=\Delta\bfv-\nabla p\\
\hspace{5cm}	\Div\bfv=0\ea\right\}\ \ \mbox{in $\Omega$}\\
	\bfv(\bfx)=\0 \ \mbox{at $\partial\Omega$}\,;\ \ \Lim{|\bfx|\to\infty}\bfv(\bfx)=-\bfb_\alpha(\theta+\theta_0)+\bfb_{\alpha}(\theta_0)\,,
	\ea\right.
	\eeq{7_diff}
	\be\left\{\ba{ll}\medskip
	\mathbb B(\theta+\theta_0)\cdot\bfdelta+(\mathbb B(\theta+\theta_0)-\mathbb B(\theta_0))\cdot\bfdelta_0+\mu\Int{\partial\Omega}{}\mathbb T(\bfv,p)\cdot\bfn=\0\qquad \\
k	\theta+\lambda\,\bfe_1\cdot\Int{\partial\Omega}{}\bfx\times\mathbb T(\bfv,p)\cdot\bfn=0\,.
	\ea\right.
	\eeq{8_diff}
If we define 
	$$
	\bfw:=\bfv+\bfb_\alpha(\theta+\theta_0)-\bfb_{\alpha}(\theta_0)\,,\qquad\bfw_0:=\bfv_0+\bfb_{\alpha}(\theta_0)\,,
	$$
	then $(\bfw,p,\bfdelta,\theta)$ solves 
		\be\left\{\ba{lc}\medskip\left.\ba{ll}\medskip
\lambda \bfb_{\alpha}(\theta+\theta_0)\cdot\nabla\bfw-{\lambda}\,(\bfw+\bfw_0)\cdot\nabla\bfw-\lambda \bfw\cdot \nabla\bfw_0+\lambda (\bfb_\alpha(\theta+\theta_0)-\bfb_\alpha(\theta))\cdot\nabla\bfw_0=-\Delta\bfw+\nabla p\\ 
\hspace{14cm}	\Div\bfw=0\ea\right\}\ \ \mbox{in $\Omega$}\\
	\bfw(\bfx)=\bfb_\alpha(\theta+\theta_0)-\bfb_{\alpha}(\theta_0) \ \ \ \mbox{at $\partial\Omega$}\,;\ \ \Lim{|\bfx|\to\infty}\bfw(\bfx)=\0\,,
	\ea\right.
	\eeq{7_diff_w}
	\be\left\{\ba{ll}\medskip
	\mathbb B(\theta+\theta_0)\cdot\bfdelta+(\mathbb B(\theta+\theta_0)-\mathbb B(\theta_0))\cdot\bfdelta_0+\mu\Int{\partial\Omega}{}\mathbb T(\bfw,p)\cdot\bfn=\0,\qquad \\
k\theta+\lambda\,\bfe_1\cdot\Int{\partial\Omega}{}\bfx\times\mathbb T(\bfw,p)\cdot\bfn=0\,.
	\ea\right.
	\eeq{8_diff_w}
	Fix $$\bff = (\bfw+\bfw_0)\cdot\nabla\bfw+ \bfw\cdot \nabla\bfw_0- (\bfb_\alpha(\theta+\theta_0)+\bfb_\alpha(\theta))\cdot\nabla\bfw_0$$
	and $$\bfv^*=\bfb_\alpha(\theta+\theta_0)-\bfb_{\alpha}(\theta_0).$$
Arguing as in the proof of existence, it is enough to show that $(\bfw, p)=({\bf0},0)$ and $\theta=0$ because the conclusion $\bfdelta=\bf0$ then follows from \eqref{8_diff_w}$_1$ as
	$$
	\bfdelta:=-\mu\, \mathbb{B}^{-1}(\theta_0)\cdot \Int{\partial\Omega}{}\mathbb T(\bfw,p)\cdot\bfn\,,
	$$
	and the surface integral is well-defined by Theorem \ref{th:1}.
	
	Fix now $q=\frac{4s}{4-s}$, with $1<s<\frac{4}{3}$. 	We observe that, since $(\theta+\theta_0)$ is known, we can treat \eqref{7_diff} as an Oseen-type system. Since $1<q<2$, we can then apply \cite[Theorem VII.7.1]{Gab} to \eqref{7_diff} and infer that $(\bfw,p)$ satisfies the estimate 
	\be
	a_1\|\bfw\|_{\frac{2q}{2-q}}+a_2\|\nabla\bfw\|_{\frac{4q}{4-q}}+\|D^2\bfw\|_{q}+\|\nabla p\|_{q}\le c_1(\lambda\|\bff\|_q+\|\bfv_*\|_{2-1/q,q,\partial\Omega})\,,
	\eeq{Oseen}
	where $a_1,a_2$ are  as in Lemma \ref{Lq-estimate} and $c_1$ depends on $\Omega$ only. 
	We now compute a bound for the right-hand side of \eqref{Oseen} in terms of the norms appearing on the left-hand side. We start observing that, Minkowski and H\"older inequality imply that
	\be
	\|(\bfw+\bfw_0)\cdot\nabla \bfw+\bfw\cdot \nabla \bfw_0\|_q\le \|\bfw+\bfw_0\|_4\|\nabla \bfw\|_{\frac{4q}{4-q}}+\|\nabla\bfw_0\|_2\|\bfw\|_{\frac{2q}{2-q}}\,.
	\eeq{RHS1}
	Let $r\in [q,\infty)$. In view of \cite[Exercise II.4.1]{Gab}, for any $\varepsilon>0$, there exists $c_2>0$ depending on $\varepsilon,\Omega$ such that 
	$$
	\begin{aligned}
		&	\|\nabla \bfw\|_{r,\partial \Omega}\le c_2\|\nabla\bfw\|_{q,\Omega'}+\varepsilon\|D^2\bfw\|_{q,\Omega'}\qquad\|p\|_{r,\partial \Omega}\le c_2\|p\|_{q,\Omega'}+\varepsilon\|\nabla p\|_{q,\Omega'}
	\end{aligned}
	$$
	for any bounded set $\Omega'$ such that $\Omega_0\subset\Omega'\subset\Omega$. Since $\Omega'\subset \Omega$ is bounded, inclusions between $L^p(\Omega')$-spaces hold, thus
	\be
	\|\nabla \bfw\|_{r,\partial \Omega}\le c_2\|\nabla \bfw\|_{\frac{4q}{4-q}}+\varepsilon\|D^2\bfw\|_{q}\qquad\text{ and }\qquad \|p\|_{r,\partial \Omega}\le c_2\|p\|_{\frac{3q}{3-q}}+\varepsilon\|\nabla p\|_q\,.
	\eeq{traces}
	Next, equation \eqref{8_diff}$_2$ and H\"older inequality imply that
	\be
	\begin{aligned}
 |\bfv^*|&\le c_3\lambda(\|\nabla\bfw\|_{r,\partial \Omega}+\|p\|_{r,\partial \Omega})\,,
\end{aligned}
	\eeq{trilinear_theta}
	where $c_3$ depends on $\Omega$ only.
Plugging \eqref{traces} in \eqref{trilinear_theta}, we get
		\be
	\begin{aligned}
		\|\bfv^*\cdot\nabla\bfw_0\|_q\le &\ c_4\lambda\left(\|\nabla \bfw\|_{\frac{4q}{4-q}}+\|D^2\bfw\|_{q}\right) \|\nabla \bfw_0\|_q+c_4\lambda \left(\|p\|_{\frac{3q}{3-q}}+\|\nabla p\|_q\right)\|\nabla \bfw_0\|_q\,,
	\end{aligned}
	\eeq{RHS2}
	for some $c_4$ depending on $\Omega$ only.
	The combination of \eqref{RHS1}-\eqref{RHS2} yields that 
	\be
	\begin{aligned}
	\|\bff\|_q& \le \|\nabla\bfw_0\|_2\|\bfw\|_{\frac{2q}{2-q}}
	+
	\left(\|\bfw+\bfw_0\|_4+c_4\lambda \|\nabla\bfw_0\|_{q}\right)\|\nabla\bfw\|_{\frac{4q}{4-q}}+c_4\lambda\|\nabla\bfw_0\|_q \|D^2\bfw\|_q\\ &\quad +c_4\lambda\|\nabla\bfw_0\|_q\|p\|_{\frac{3q}{3-q}}+c_4\lambda\|\nabla\bfw_0\|_q\|\nabla p\|_q\,.
	\end{aligned}
	\eeq{f_Lq}
	 By \cite[(II.6.22)]{Gab}, we  get the existence of some $\gamma_1>0$, independent of $p$ such that
	\be
	\|p\|_{\frac{3q}{3-q}}\le \gamma_1\|\nabla p\|_q\,.
	\eeq{p}
	Thus, plugging \eqref{trilinear_theta}-\eqref{f_Lq} in \eqref{Oseen}, we obtain 
		\be
		\begin{aligned}
&	a_1\|\bfw\|_{\frac{2q}{2-q}}+a_2\|\nabla\bfw\|_{\frac{4q}{4-q}}+\|D^2\bfw\|_{q}+\frac{1}{\gamma_1}\|p\|_{\frac{3q}{3-q}}+\|\nabla p\|_{q}
	\\&\le c_5\bigg(\lambda \|\nabla\bfw_0\|_2\|\bfw\|_{\frac{2q}{2-q}}
	+\left(\lambda\|\bfw+\bfw_0\|_4+\lambda^2 \|\nabla\bfw_0\|_{q}+\lambda\right)\|\nabla\bfw\|_{\frac{4q}{4-q}}+(\lambda^2\|\nabla\bfw_0\|_q+\lambda) \|D^2\bfw\|_q\\&\quad\quad+(\lambda^2\|\nabla\bfw_0\|_q+\lambda)\|p\|_{\frac{3q}{3-q}}+(\lambda^2\|\nabla\bfw_0\|_q+\lambda)\|\nabla p\|_q\bigg)\,.
	\end{aligned}
	\eeq{Oseen_2}
	 Since $q=\frac{4s}{4-s}$ , we can use Lemma \ref{Lq-estimate} and, under condition \eqref{smallness1}, we deduce that
	$$
		\begin{aligned}
	&a_1\|\bfw\|_{\frac{2q}{2-q}}+a_2\|\nabla\bfw\|_{\frac{4q}{4-q}}+\|D^2\bfw\|_{q}+\frac{1}{\gamma_1}\|p\|_{\frac{3q}{3-q}}+\|\nabla p\|_{q}\\&\le c_5\bigg(\frac{\lambda}{a_1}\left(1+\lambda\right)\,a_1\|\bfw\|_{\frac{2q}{2-q}}+\frac{1}{a_2}\big(\frac{\lambda}{a_1}(1+\lambda) +\frac{\lambda^2}{a_2}(1+\lambda)+\lambda\big)\,a_2\|\nabla\bfw\|_{\frac{4q}{4-q}}
	\\&\qquad+\big(\frac{\lambda^2}{a_2}(1+\lambda)+\lambda\big)(\|D^2\bfw\|_{q}+\|\nabla p\|_{q})
	+\gamma_1\big(\frac{\lambda^2}{a_2}(1+\lambda)+\lambda\big)\frac{1}{\gamma_1}\|p\|_{\frac{3q}{3-q}}
	\bigg)\,.
	\end{aligned}
	$$
Taking into account \eqref{aa} and the bound on $\lambda$, we deduce from this inequality that
	$$
	\begin{aligned}
		&a_1\|\bfw\|_{\frac{2q}{2-q}}+a_2\|\nabla\bfw\|_{\frac{4q}{4-q}}+\|D^2\bfw\|_{q}+\frac{1}{\gamma_1}\|p\|_{\frac{3q}{3-q}}+\|\nabla p\|_{q}\\&\le c_6\frac{\lambda}{a_1a_2}\left(a_1\|\bfw\|_{\frac{2q}{2-q}}+a_2\|\nabla\bfw\|_{\frac{4q}{4-q}}+\|D^2\bfw\|_{q}+\frac{1}{\gamma_1}\|p\|_{\frac{3q}{3-q}}+\|\nabla p\|_{q}\right)\,,
	\end{aligned}
	$$
	with yet another constant $c_6$ depending on $\Omega$ only.
Restricting again the value of $\lambda$ if necessary to ensure that 
	\be
	\frac\lambda{
	\text{min}\{1,\lambda^{1/2}\}\text{min}\{1,\lambda^{1/4}\}}<\frac{1}{c_6}
	\eeq{s2}
	we conclude that ${\bfw}=\bf0$ a.e. in $\Omega$. Therefore, Theorem \ref{th:unicita} by redefining $\lambda_0$ in \eqref{smallness1}, if necessary, to ensure \eqref{s2} holds. 

\bigskip\par
\noindent
{\bf Acknowledgements.} The work of G.P.~Galdi is partially supported by National Science Foundation Grant DMS-2307811.
D. Bonheure and C. Patriarca are supported by the WBI grant ARC Advanced 2020-25 ``PDEs in interaction'' at ULB. D. Bonheure is also partially supported by the FNRS PDR grant T.0020.25 and the Fonds Thelam 2025-F2150080-0021313.

\ed